\documentclass[runningheads]{llncs}

\usepackage[T1]{fontenc}

\usepackage{newtxtext}
\usepackage[varvw]{newtxmath}

\usepackage{mathtools}
\usepackage{graphicx}
\usepackage{tikz}
\usepackage{chngcntr}

\usepackage{url}
\usepackage{color}

\usepackage{hyperref}

\hypersetup{
    colorlinks=true,
    linkcolor=blue,
    citecolor=blue,
    urlcolor=blue
}

\tikzset{
  inS/.style={
    circle,
    fill=black,
    inner sep=2.2pt
  },
  notS/.style={
    circle,
    draw=black,
    fill=white,
    inner sep=2.2pt,
    line width=0.8pt
  },
  Hedge/.style={
    line width=1.5pt
  },
  vtxlbl/.style={
    font=\small
  }
}

\newcommand{\Set}[1]{\left\{ #1 \right\}}
\newcommand{\card}[1]{\left| #1 \right|}
\newcommand{\Union}{\bigcup}

\newcommand{\union}{\cup}
\newcommand{\intersect}{\cap}
\newcommand{\Hull}[2][]{\operatorname{Hull}_{#1}(#2)}
\newcommand{\Itr}[1]{I_{\triangle}(#1)}
\newcommand{\defn}{\coloneq}
\newcommand{\Tri}{%
  \mathord{\mathchoice
    {\raisebox{-0.05ex}{\scalebox{1.20}{$\displaystyle\triangle$}}}
    {\raisebox{-0.05ex}{\scalebox{1.20}{$\textstyle\triangle$}}}
    {\mkern-2mu
     \raisebox{-0.03ex}{\scalebox{1.10}{$\scriptstyle\triangle$}}
     \mkern-2mu}
    {\mkern-1mu
     \raisebox{-0.02ex}{\scalebox{1.05}{$\scriptscriptstyle\triangle$}}
     \mkern-1mu}%
  }%
}

\title{Tight Bounds on the Carath\'eodory and Exchange Numbers in \(\Tri\)-Convexity}

\author{
Vishnu Kumar\orcidID{0009-0005-8914-2340}
\and
Brahadeesh Sankarnarayanan\orcidID{0000-0001-9191-1253}
}

\authorrunning{V. Kumar and B. Sankarnarayanan}

\institute{
Department of Mathematics,
Indian Institute of Technology Jodhpur,
Jodhpur 342030, Rajasthan, India\\
\email{p25ma0014@iitj.ac.in}, \email{brahadeesh@iitj.ac.in}\\
\url{https://brahadeesh1994@github.io}
}

\begin{document}

\maketitle

\begin{abstract}
The \emph{\(\Tri\)-convexity space} on a finite, simple graph \(G = (V,E)\) is the collection \(\mathcal{C}\) of subsets \(S \subseteq V(G)\) such that whenever \(x \in V(G)\) forms a triangle with two vertices in \(S\), we have \(x \in S\).
The members of \(\mathcal{C}\) are called \emph{convex} sets, and the \emph{convex hull} of a set \(S \subseteq V(G)\), denoted \(\Hull{S}\), is the smallest member of \(\mathcal{C}\) that contains \(S\).
A set \(S \subseteq V(G)\) is \emph{Carath\'eodory independent} if
\[
\Hull{S}
\setminus
\Union_{x \in S} \Hull{S \setminus \Set{x}}
\neq \emptyset,
\]
and it is \emph{exchange independent} if \(\card{S} = 1\) or there
exists \(p \in S\) such that
\[
\Hull{S \setminus \Set{p}}
\setminus
\Union_{x \in S \setminus \Set{p}}
\Hull{S \setminus \Set{x}}
\neq \emptyset.
\]
The \emph{Carath\'eodory} (resp., \emph{exchange}) \emph{number},
\(c_{\Tri}(G)\) (resp., \(e_{\Tri}(G)\)), is the size of a largest
Carath\'eodory (resp., exchange) independent subset of \(V(G)\).
It was shown by Anand--Anil--Changat--Narasimha-Shenoi--Ramla~\cite{AnandAnilEtAl2025b} in 2025 that
\(c_{\Tri}(G) \leq t(G)+1\) and \(e_{\Tri}(G) \leq t(G) + 2\), where \(t(G)\) is the number of triangles in \(G\), and that these bounds are tight.
They also computed \(c_{\Tri}(G)\) and \(e_{\Tri}(G)\) for a block graph \(G\) in terms of the number and arrangement of non-\(K_2\) blocks in \(G\).

In this paper, we point out a gap in the proof in~\cite{AnandAnilEtAl2025b} of the first inequality, \(c_{\Tri}(G) \leq t(G)+1\), which has consequences for the proof of the second inequality, \(e_{\Tri}(G) \leq t(G) + 2\), as well.
Moreover, the tightness results are inadvertently applied as characterizations of the extremal graphs, leading to incorrect computations of the Carath\'eodory and exchange numbers of block graphs in certain cases.
We fix these gaps by giving a full proof of the first inequality via a different route from that in~\cite{AnandAnilEtAl2025b}.
Together with the argument in~\cite{AnandAnilEtAl2025b}, this also completes the proof of the second inequality.
Our proof also leads to a characterization of the extremal graphs for each bound, which we use to compute the Carath\'eodory and exchange numbers of block graphs and to identify the extremal block graphs.
We also determine \(e_{\Tri}(G)\) exactly for \(k\)-trees for every \(k \geq 2\).

\keywords{Triangle convexity \and Carath\'eodory number
\and Exchange number \and Graph convexity \and Block graph \and \(k\)-tree.}
\end{abstract}

\section{Introduction}

\subsection{Abstract convexity spaces}
An \emph{abstract convexity space} on a finite set \(X\) is a collection \(\mathcal{C}\) of subsets of \(X\) such that \(\emptyset, X \in \mathcal{C}\) and if \(A,B \in \mathcal{C}\) then \(A \intersect B \in \mathcal{C}\).
The members of \(\mathcal{C}\) are called \emph{convex} sets, and the \emph{convex hull}, \(\Hull{S}\), of a set \(S \subseteq X\) is defined to be the smallest convex set containing \(S\).
Abstract convexity spaces were first defined in 1951 by Levi~\cite{Levi1951}.
The classical theorems of convex geometry in Euclidean spaces due to Carath\'eodory~\cite{Caratheodory1911}, Radon~\cite{Radon1921}, and Helly~\cite{Helly1923} (among others) give rise to important combinatorial parameters that measure the dimension of abstract convexity spaces.
Two such parameters that are central to this work are defined below.

In a convexity space \((X,\mathcal{C})\), a set \(S \subseteq X\) is said to be \emph{Carath\'eodory independent} if
\begin{equation}
	\Hull{S} \setminus \Union_{x \in S} \Hull{S \setminus \Set{x}} \neq \emptyset.
\end{equation}
The \emph{Carath\'eodory number} of \(X\), \(c(X)\), is the size of a largest Carath\'eodory independent subset of \(X\).
Similarly, a set \(S \subseteq X\) is said to be \emph{exchange independent} if \(\card{S} = 1\) or there exists \(p \in S\) such that
\begin{equation}
	\Hull{S \setminus \Set{p}} \setminus \Union_{x \in S \setminus \Set{p}} \Hull{S \setminus \Set{x}} \neq \emptyset.
\end{equation}
The \emph{exchange number} of \(X\), \(e(X)\), is the size of a largest exchange independent subset of \(X\).
These parameters are related by the following result of Sierksma:
\begin{theorem}[Sierksma~\cite{Sierksma1984}, 1984]\label{T:Sierksma}
	For any convexity space \((X,\mathcal{C})\), we have \(e(X) \leq c(X) + 1\).
\end{theorem}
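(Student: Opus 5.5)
The plan is to show that if \(S\) is exchange independent with \(\card{S} = n \geq 2\), witnessed by \(p \in S\), then \(T \defn S \setminus \Set{p}\) is Carath\'eodory independent. This gives \(c(X) \geq n-1\), and taking \(S\) of maximum size yields \(e(X) \leq c(X) + 1\). The case \(\card{S} = 1\) is immediate, since \(c(X) \geq 0\) always and \(c(X) \geq 1\) whenever \(X \neq \emptyset\).

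Fix \(S\) and \(p\) as above, and pick a witness
\[
y \in \Hull{S \setminus \Set{p}} \setminus \Union_{x \in S \setminus \Set{p}} \Hull{S \setminus \Set{x}}.
\]
Then \(y \in \Hull{T}\). Suppose, for contradiction, that \(y \in \Hull{T \setminus \Set{x}}\) for some \(x \in T\). Since \(T \setminus \Set{x} = S \setminus \Set{p,x} \subseteq S \setminus \Set{x}\), monotonicity of the hull operator gives \(\Hull{T \setminus \Set{x}} \subseteq \Hull{S \setminus \Set{x}}\). Hence \(y \in \Hull{S \setminus \Set{x}}\), which contradicts the choice of \(y\).

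It follows that \(y \in \Hull{T} \setminus \Union_{x \in T} \Hull{T \setminus \Set{x}}\), so \(T\) is Carath\'eodory independent with \(\card{T} = n-1\). Therefore \(c(X) \geq e(X) - 1\).

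The only ingredient is monotonicity of \(\Hull{\cdot}\). This holds because \(A \subseteq B\) implies that \(\Hull{B}\) is a convex set containing \(A\), and \(\Hull{A}\) is the intersection of all such sets. This intersection lies in \(\mathcal{C}\) because \(\mathcal{C}\) is finite, closed under intersection, and contains \(X\).

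I expect no real obstacle. The one subtle point is the degenerate case \(n = 2\): there \(T\) is a singleton \(\Set{t}\), and the condition reads \(y \in \Hull{\Set{t}} \setminus \Hull{\emptyset}\), which is handled uniformly by the same argument.
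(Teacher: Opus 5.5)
Your proof is correct, and it is essentially the argument the paper relies on: the paper cites Sierksma for this statement without proof, but in the proof of Theorem~\ref{T:New2} it passes from an exchange independent \(S\) with distinguished vertex \(p\) to \(R = S\setminus\Set{p}\). It then concludes that \(R\) is Carath\'eodory independent from \(\Hull{R\setminus\Set{x}}\subseteq\Hull{S\setminus\Set{x}}\), which is exactly your monotonicity step, and your handling of the degenerate cases \(\card{S}=1\) and \(\card{S}=2\) (using \(\Hull{\emptyset}=\emptyset\)) is also fine.
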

We refer the reader to van de Vel~\cite{vanDeVel1993} for a detailed survey of abstract convexity structures.
We shall work instead with a concrete convexity structure, which we describe below.

\subsection{Graph convexities}
A \emph{graph convexity} is a convexity structure on the vertex set of a graph, usually assumed to be finite and simple.
One of the earliest papers on convexity in graphs was published in 1972 by Erd\H{o}s--Fried--Hajnal--Milner~\cite{ErdosFriedEtAl1972} and focussed on tournaments.
A systematic investigation of graph convexity was initiated in the 1980s by the works of Jamison~\cite{Jamison1981} and Harary~\cite{Harary1984}.
Many classical graph convexities are path-based, such as geodesic convexity~\cite{HararyNieminen1981}, monophonic convexity~\cite{Jamison1982}, \(P_3\) convexity~\cite{CentenoDouradoEtAl2009}, and so on (see~\cite{ChangatMathew1999,ChangatMulderEtAl2005}).
Graph convexities that are not path-based often arise from threshold functions~\cite{Chen2009} (to be defined!).
We refer the reader to the recent work of Ara\'ujo--Dourado--Protti--Sampaio~\cite{AraujoDouradoEtAl2025} for a detailed survey of these graph convexities, particularly from an algorithmic perspective.

\subsection{\(\Tri\)-convexity}
In this paper, we work with a notion of convexity which is neither path-based nor defined via threshold functions.
For a finite, simple graph \(G = (V,E)\), the \emph{\(\Tri\)-convexity} on \(G\) is the collection \(\mathcal{C}\) of those subsets of \(V(G)\) that are closed under formation of triangles; that is, \(C \in \mathcal{C}\) if and only if whenever \(x \in V(G)\) forms a triangle with two vertices in \(C\), we have \(x \in C\).\footnote{We make the stylistic choice to refer to this notion as \(\Tri\)-convexity, instead of \(\Delta\)-convexity as is done elsewhere in the literature, in order to emphasize that the defining structures of this graph convexity are triangles. It is also worth adding that \(\Tri\)-convexity is different from the notion of \emph{triangle path convexity} defined in~\cite{ChangatMathew1999}, which is instead a path-based convexity.}
The notion of \(\Tri\)-convexity was defined by Mulder~\cite{Mulder2008} in 2008 in the context of transit functions.
Several convexity parameters are known to be computationally hard in \(\Tri\)-convexity.
Anand--Anil--Changat--Dourado--Ramla~\cite{AnandAnilEtAl2020} in 2020 showed that determining whether the \emph{\(\Tri\)-hull number}, the minimum size of a set whose \(\Tri\)-convex hull is the entire vertex set, is at most \(k\) for a general graph is NP-complete.
Anand--Dourado--Narasimha-Shenoi--Ramla~\cite{AnandDouradoEtAl2022} in 2022 showed that computing the \emph{\(\Tri\)-convexity number}, the maximum size of a proper \(\Tri\)-convex set, is W[1]-hard and computing the \emph{\(\Tri\)-interval number}, the minimum size of a set \(S\) such that every vertex outside \(S\) forms a triangle with two vertices of \(S\), is NP-complete.
These papers also give polynomial-time algorithms for computing the corresponding parameters on certain special graph classes.
Bounds for the Helly and Radon numbers in \(\Tri\)-convexity were computed by Anand--Anil--Changat--Nair--Narasimha-Shenoi~\cite{AnandAnilEtAl2025a} in 2025, and bounds for the Carath\'eodory and exchange numbers were computed in Anand--Anil--Changat--Narasimha-Shenoi--Ramla~\cite{AnandAnilEtAl2025b} in 2025.
The results in the present paper concern the latter work.

\subsection{Known results}
We shall use the notations \(c_{\Tri}(G)\) and \(e_{\Tri}(G)\) to refer to the Carath\'eodory and exchange numbers of a graph \(G\) in \(\Tri\)-convexity.
Anand--Anil--Changat--Narasimha-Shenoi--Ramla~\cite{AnandAnilEtAl2025b} gave a bound for \(c_{\Tri}(G)\) in terms of \(t(G)\), the number of triangles in \(G\), as follows:
\begin{theorem}[{\cite[Theorem 2.3]{AnandAnilEtAl2025b}}]\label{T:Main}
	For a finite, simple graph \(G = (V,E)\), we have \(c_{\Tri}(G) \leq t(G) + 1\).
\end{theorem}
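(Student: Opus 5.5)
The plan is to prove the inequality by analysing how a single witness vertex enters the hull through a step-by-step derivation, and then counting derivation steps against triangles. Let \(S\) be a Carath\'eodory independent set with \(\card{S} = k\). If \(k \leq 1\) there is nothing to prove, so assume \(k \geq 2\). Fix a witness
\[
p \in \Hull{S} \setminus \Union_{x \in S} \Hull{S \setminus \Set{x}}.
\]
Then \(p \notin S\): if \(p \in S\), pick any \(x \in S\) with \(x \neq p\), and \(p \in S \setminus \Set{x} \subseteq \Hull{S \setminus \Set{x}}\), a contradiction.

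First I will describe \(\Hull{S}\) concretely. Starting from \(S\), repeatedly add a vertex that forms a triangle with two vertices already present, until no such vertex exists. The resulting set is convex and is contained in every convex set containing \(S\), so it equals \(\Hull{S}\). This gives a sequence \(v_1, \dots, v_n\) of added vertices, each \(v_i\) with a witnessing triangle \(T_i = \Set{v_i, a_i, b_i}\), where \(a_i, b_i\) lie in \(S \union \Set{v_1, \dots, v_{i-1}}\). Call \(a_i\) and \(b_i\) the parents of \(v_i\).

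Next I restrict to the ancestors of \(p\). Let \(A\) be the smallest set containing \(p\) that is closed under taking parents of added vertices, and let \(m\) be the number of added vertices in \(A\). Listing these added vertices in their original order gives a valid derivation of \(p\) from \(A \intersect S\). Hence \(p \in \Hull{A \intersect S}\).

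Two facts drive the count.

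(i) Every \(x \in S\) lies in \(A\). Otherwise \(A \intersect S \subseteq S \setminus \Set{x}\), so \(p \in \Hull{S \setminus \Set{x}}\), contradicting the choice of \(p\).

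(ii) The triangles \(T_i\) for the added vertices in \(A\) are pairwise distinct. Suppose \(T_i = T_j\) with \(i < j\). Then \(v_j \in T_i \setminus \Set{v_i} = \Set{a_i, b_i}\), so \(v_j\) was present before step \(i\), which is impossible since it was added at step \(j > i\).

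Fact (ii) immediately gives \(m \leq t(G)\).

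It then remains to show \(k - 1 \leq m\), which I would do by a union--find count. Begin with the \(k\) singletons \(\Set{x}\), \(x \in S\), as components. Process the added vertices of \(A\) in order. When \(v_i\) is processed, merge \(v_i\) together with the components containing \(a_i\) and \(b_i\) into one component. Each step lowers the number of components by at most one, since it creates one new element but merges at most two existing components.

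At the end, every element of \(A\) lies in the same component as \(p\). Indeed, every vertex of \(A\) is an ancestor of \(p\), and each parent is put in the same component as its child. By (i), all \(k\) elements of \(S\) are therefore in one component, so at least \(k-1\) merging steps were needed. This gives \(k - 1 \leq m \leq t(G)\), that is, \(c_{\Tri}(G) \leq t(G) + 1\).

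The main obstacle I expect is making the ancestor-restriction rigorous, since fact (i) depends on it. Specifically, one must check that the parent-closed set \(A\) really yields a valid derivation of \(p\) from \(A \intersect S\) alone: each added vertex of \(A\) must have both of its parents already available within \(A\) at the time it is added. This holds because \(A\) is closed under taking parents and the original order is preserved. I would spell this out carefully, because it is exactly where a naive argument that counts all hull vertices, rather than only the ancestors of \(p\), can go wrong.
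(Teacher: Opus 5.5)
Your proof is correct, and it takes a genuinely different and much shorter route than the paper. The paper organizes the count around the components of \(G[S]\). After reducing to connected graphs in which every edge lies on a triangle, it shows that the families \(\mathcal{T}_H\) are disjoint and that only ``exceptional'' tree components can fall one triangle short. It then handles two or more exceptional components in one of two ways: an edge-contraction reduction that invokes induction on \(t(G)\) (Case~1), or a lengthy discharging argument over \(4\)-cycle configurations (Case~2).

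You instead fix a witness \(p\), keep only the ancestor part of one derivation of it, and charge each element of \(S\) beyond the first to a distinct derivation step. Distinct steps use distinct triangles, and the union--find count shows that tying all \(k\) elements of \(S\) to \(p\) takes at least \(k-1\) steps. This needs no preliminary reductions, induction or case analysis. It proves the sharper bound \(\card{S}\le m+1\), and hence also \(\card{S}\le(\card{V(G)}+1)/2\), since the \(m\) added ancestors are distinct vertices outside \(S\).

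It also appears to give the extremal structure almost for free. If \(\card{S}=t(G)+1\), then \(m=t(G)\), so every triangle of \(G\) is a derivation triangle. Every step must also merge three distinct components, so the parent edges on the ancestor set form a tree. Hence every ancestor other than \(p\) has exactly one child, since two children would give two distinct paths to \(p\). The derivation is therefore a full binary tree with leaf set \(S\) and root \(p\), whose triangles are the node-plus-two-children triples. Under the hypotheses of Theorem~\ref{T:New1} this is exactly the sibling-edge graph, so your method recovers the (\(\Rightarrow\)) direction there without Lemma~\ref{L:CI-preservation}.

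What the paper's approach buys is a description of how \(S\) itself sits in \(G\): its components and the triangles meeting them, which is what its inductive reduction runs on. For the inequality itself, your argument is the more economical one.
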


As a consequence of Theorems~\ref{T:Sierksma} and~\ref{T:Main}, the authors prove the following bound for \(e_{\Tri}(G)\):
\begin{corollary}[{\cite[Lemma 3.6]{AnandAnilEtAl2025b}}]\label{C:Main}
	For a finite, simple graph \(G = (V,E)\), we have \(e_{\Tri}(G) \leq t(G) + 2\).
\end{corollary}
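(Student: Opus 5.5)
The statement follows at once from Theorem~\ref{T:Sierksma} and Theorem~\ref{T:Main}, so the plan is simply to chain the two inequalities. Since the abstract says the original proof of Theorem~\ref{T:Main} has a gap, I will treat Theorem~\ref{T:Main} as the result to be (re)established later in the paper, and make the corollary depend only on its statement.

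The \(\Tri\)-convexity on \(V(G)\) is an abstract convexity space. Indeed, \(\emptyset\) and \(V(G)\) are trivially closed under triangle formation. Moreover, if \(A\) and \(B\) are both triangle-closed and \(x\) forms a triangle with \(u,v\in A\intersect B\), then \(x\in A\) and \(x\in B\). So Theorem~\ref{T:Sierksma} applies and gives \(e_{\Tri}(G)\le c_{\Tri}(G)+1\). Combining this with Theorem~\ref{T:Main}, \(c_{\Tri}(G)\le t(G)+1\), yields \(e_{\Tri}(G)\le t(G)+2\).

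There is no real obstacle in this deduction; all the difficulty sits inside Theorem~\ref{T:Main}. For that, my plan is induction on \(|S|\) together with a charging argument. Let \(S\) be Carath\'eodory independent and pick \(w\in \Hull{S}\setminus\Union_{x\in S}\Hull{S\setminus\Set{x}}\). The hull is built by iteratively adding vertices that form triangles with two already-present vertices. Every \(x\in S\) must be essential for reaching \(w\). I would try to assign to all but one element of \(S\) a distinct triangle of \(G\) used in the closure process, namely the triangle that first brings in a vertex depending on that element. This would give \(|S|-1\le t(G)\). The step I expect to be hardest is proving injectivity of this assignment, since one triangle might be the "first use" of several generators.
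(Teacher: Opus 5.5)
Your deduction is correct and matches the paper's: the corollary follows by combining Sierksma's inequality \(e_{\Tri}(G)\le c_{\Tri}(G)+1\) (Theorem~\ref{T:Sierksma}, which applies because triangle-closed sets form a convexity space, as you check) with Theorem~\ref{T:Main}. Your sketch for Theorem~\ref{T:Main} is not needed for this corollary, and as stated the first-use map is not injective (two sibling leaves \(u,v\) with apex \(x\) both get \(\triangle uvx\)); the paper instead proves that theorem by counting triangles attached to the components of \(G[S]\), together with an inductive edge contraction.
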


The authors also show that these bounds are tight~\cite[Propositions 2.4, 3.8]{AnandAnilEtAl2025b}.
They further refine these results for block graphs by expressing the bounds in terms of the number and arrangement of non-\(K_2\) blocks.
\begin{theorem}[{\cite[Theorems 2.9 and 3.10]{AnandAnilEtAl2025b}}]\label{T:block}
Let $G$ be a block graph with $\ell$ blocks. Then the following statements hold:
\begin{enumerate}
    \item If $G$ contains no blocks isomorphic to the complete graph $K_2$ and all the blocks lie in a single chain, then
    \(
    c_{\Tri}(G) = \ell + 1 = e_{\Tri}(G).
    \)

    \item If $G$ contains no blocks isomorphic to the complete graph $K_2$ and the blocks lie in more than one chain, and $k$ is the number of blocks in the longest chain in $G$, then
    \(
    c_{\Tri}(G) = k + 1 = e_{\Tri}(G) - 1.
    \)

    \item If $G$ contains blocks isomorphic to the complete graph $K_2$ and $k$ is the maximum number of consecutive non-$K_2$ blocks in a chain, then
    \(
    c_{\Tri}(G) = k + 1 = e_{\Tri}(G) - 1.
    \)
\end{enumerate}
\end{theorem}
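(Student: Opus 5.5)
The plan is to first describe \(\Tri\)-hulls in a block graph \(G\) explicitly, then bound \(c_{\Tri}(G)\) and \(e_{\Tri}(G)\) from above and from below using that description. Since every triangle of a block graph lies inside a single block, which is a clique, I would show by induction on the closure procedure the following description. Say a block is \emph{activated} by a set \(S\) if it contains at least two vertices of the current hull and is not a \(K_2\); an activated block is absorbed entirely into the hull. Starting from \(S\), \(\Hull{S}\) is obtained by repeatedly absorbing activated blocks. Absorption spreads only through cut vertices shared by consecutive non-\(K_2\) blocks. So every component of \(\Hull{S}\) that is larger than a point is a union of non-\(K_2\) blocks, and these blocks are connected through cut vertices without passing through any \(K_2\) block. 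This is the combinatorial backbone for everything else.

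For the upper bound on \(c_{\Tri}(G)\), take a Carath\'eodory independent \(S\) with a witness \(w \in \Hull{S} \setminus \Union_{x} \Hull{S\setminus\Set{x}}\). Then every \(x \in S\) is essential for absorbing the block containing \(w\). Using the hull description, I would show three things. First, the blocks absorbed before \(w\) is reached form a sequence of consecutive non-\(K_2\) blocks \(B_1,\dots,B_m\) in a single chain. Second, each \(B_i\) with \(i \ge 2\) receives exactly one new vertex of \(S\) beyond the cut vertex it shares with \(B_{i-1}\), while \(B_1\) receives two. Third, any branching would make some element of \(S\) redundant. Together these give \(\card{S} \le m+1\) with \(m \le k\), where \(k\) is the maximum number of consecutive non-\(K_2\) blocks in a chain; in case~1 this is \(k=\ell\). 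For the matching lower bound I would construct \(S\) explicitly along a longest chain. The construction must be chosen with care. The naive choice of an endpoint, all cut vertices, and the other endpoint fails: already for two blocks, the hulls of the subsets cover everything. So I would instead place the elements of \(S\) at non-cut vertices in a staggered way and check the witness directly.

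For \(e_{\Tri}(G)\), Theorem~\ref{T:Sierksma} gives \(e_{\Tri}(G) \le c_{\Tri}(G)+1\). It then remains to decide, case by case, whether this bound is attained. In case~2 and case~3 I would try to build an exchange independent set from a Carath\'eodory independent set on a longest chain, together with one extra vertex \(p\) taken from a branching block or across a \(K_2\) block. In case~1 I would show that no such \(p\) is available, giving \(e_{\Tri}=c_{\Tri}\).

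I expect the main obstacle to be exactly these equality claims: pinning down the precise lower-bound constructions, and ruling out or achieving \(e_{\Tri}=c_{\Tri}+1\) in each configuration. The introduction warns that the bounds were applied as characterizations of the extremal graphs. So before committing to the stated formulas, I would test small block graphs by hand, such as a single \(K_3\), two \(K_3\)'s sharing a vertex, and a \(K_3\)--\(K_2\)--\(K_3\) path. I would check especially case~3, where a pendant \(K_2\) may not supply the extra exchange vertex, and adjust the statement's case split if a counterexample appears.
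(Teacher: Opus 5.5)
There is a genuine gap, and it cannot be closed: the statement is false in cases~2 and~3. The paper does not prove it. It quotes it from \cite{AnandAnilEtAl2025b} in order to refute it (Section~\ref{S:gaps}, Figure~\ref{fig:block-counterexample}), and replaces it by Theorems~\ref{T:New3}--\ref{T:New5}. The step of your plan that breaks is the structural claim behind the upper bound. You assert three things: the blocks fired on the way to the witness form a single chain $B_1,\dots,B_m$; each $B_i$ with $i\ge2$ receives exactly one new vertex of $S$; and branching makes some element of $S$ redundant. Take the graph of Figure~\ref{fig:block-counterexample}, with triangles $T_1=v_0v_1u_1$, $T_2=v_1v_2u_2$, $T_3=v_2v_3u_3$ and $T_4=u_2w_1w_2$. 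Its block--cut tree is a star centred at $T_2$, so the longest chain has $k=3$ blocks. For $S=\Set{v_0,u_1,w_1,w_2,u_3}$ the closure proceeds in three rounds:
\begin{itemize}
\item $T_1$ and $T_4$ fire, producing $v_1$ and $u_2$;
\item $T_2$ fires, producing $v_2$;
\item $T_3$ fires, producing $v_3$.
\end{itemize}
Deleting any one element of $S$ blocks one of these four firings. Hence $v_3$ witnesses Carath\'eodory independence, and $c_{\Tri}(G)=5\ne k+1$. Note that $T_2$ receives no vertex of $S$ at all. It is fed by two branches, and nothing is redundant. In general, the derivation of a witness is a full binary tree, not a path: each fired triangle produces a ``parent'' vertex from two ``children''. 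This is exactly the structure captured by Theorems~\ref{T:New1} and~\ref{T:New4}. Attaching a pendant $K_2$ to this graph breaks the formula $c_{\Tri}=k+1$ of case~3 in the same way.

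Your exchange argument fails on the same example. Theorem~\ref{T:New3} gives $e_{\Tri}(G)\le b(H_0)+1=5$. The set $S$, with distinguished vertex $u_3$ and witness $v_2$, shows $e_{\Tri}(G)=5=c_{\Tri}(G)$. So the exchange independent set of size $c_{\Tri}+1$ you planned to build in case~2 does not exist.

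Your proposed sanity checks ($K_3$; two $K_3$'s sharing a vertex; $K_3$--$K_2$--$K_3$) are all consistent with the statement, so they would not expose the problem. A counterexample needs a block two of whose vertices are produced by two further branches while a third leads onward. That requires at least four non-$K_2$ blocks.

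Nor is the repair a mere change of case split. Replacing ``longest chain'' by the number $b(H_0)$ of blocks of a maximal connected $K_2$-free block subgraph gives only an upper bound. The paper proves $c_{\Tri}(G)\le\max b(H_0)+1$ by pruning a derivation to at most one triangle per block (Lemma~\ref{L:block-skeleton}) and then applying $c_{\Tri}\le t+1$. This bound can be strict: for three triangles sharing one vertex, $c_{\Tri}=3<b+1=4$. Equality requires the binary-tree structure of Theorem~\ref{T:New4}.

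Case~1 does survive, with $c_{\Tri}=e_{\Tri}=\ell+1$. Even there, however, your plan is incomplete in two ways. The bound $e_{\Tri}\le\ell+1$ needs an argument like that in Theorem~\ref{T:New3}, since Sierksma only gives $\ell+2$. You also still need an explicit exchange independent set of size $\ell+1$.
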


For \(2\)-connected chordal graphs, they obtain the following sharper bounds:
\begin{theorem}[{\cite[Theorem 2.8, Lemma 3.9]{AnandAnilEtAl2025b}}]\label{T:chordal}
	If \(G\) is a \(2\)-connected chordal graph, then \(c_{\Tri}(G) = 2\) and \(e_{\Tri}(G) \in \{ 2, 3\}\).
\end{theorem}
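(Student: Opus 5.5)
The plan is to reduce everything to one structural lemma: \emph{in a \(2\)-connected chordal graph \(G\), the convex hull of any edge is all of \(V(G)\)}. With this lemma, the value of \(c_{\Tri}(G)\) follows from a short case analysis on whether \(S\) contains an edge. The bound on \(e_{\Tri}(G)\) then follows from Theorem~\ref{T:Sierksma} together with a trivial lower bound.

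To prove the lemma I will induct on \(\card{V(G)}\). The base case is \(K_3\), where it is clear. If \(G\) is complete, any edge \(uv\) forms a triangle with every other vertex, so \(\Hull{\Set{u,v}} = V(G)\). Otherwise \(G\) is a non-complete chordal graph, so by Dirac's theorem it has two nonadjacent simplicial vertices. Given an edge \(uv\), at least one of these, say \(w\), lies outside \(\Set{u,v}\): two nonadjacent vertices cannot both lie in \(\Set{u,v}\).

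I will check that \(G - w\) is again \(2\)-connected and chordal. It is chordal as an induced subgraph. For \(2\)-connectivity, note that \(N(w)\) is a clique with at least two vertices, since \(G\) is \(2\)-connected. Any path in \(G\) through \(w\) can therefore be shortcut inside \(N(w)\). A single vertex whose deletion disconnected \(G - w\) would then also disconnect \(G\), or would be a neighbour \(x\) of \(w\) with all remaining neighbours of \(w\) on one side; in that case \(x\) is already a cut vertex of \(G\), a contradiction. (This step needs \(\card{V(G)} \geq 4\) so that \(G - w\) has at least three vertices.)

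By induction \(\Hull{\Set{u,v}}\) contains \(V(G) \setminus \Set{w}\). Indeed, the hull of \(\Set{u,v}\) in \(G\), intersected with \(V(G - w)\), is \(\Tri\)-convex in \(G - w\), because triangles of \(G - w\) are triangles of \(G\). Finally, \(w\) forms a triangle with two adjacent neighbours, both of which lie in the hull, so \(w\) is in the hull too. I expect this lemma, and specifically the check that deleting the simplicial vertex preserves \(2\)-connectivity, to be the main obstacle; everything else is bookkeeping.

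Now let \(S\) be any set with \(\card{S} \geq 3\).
\begin{itemize}
\item If \(S\) contains an edge \(xy\), pick \(z \in S \setminus \Set{x,y}\). The lemma gives \(\Hull{S \setminus \Set{z}} = V(G)\), so \(S\) is not Carath\'eodory independent.
\item If \(S\) is an independent set, then no triangle has two vertices in \(S\). Hence \(S\) is convex and \(\Hull{S} = S\). Each \(x \in S\) lies in \(\Hull{S \setminus \Set{y}}\) for any \(y \neq x\), so the difference is empty.
\end{itemize}
Thus \(c_{\Tri}(G) \leq 2\). For the reverse inequality, \(G\) is \(2\)-connected and so has an edge \(uv\) lying in a triangle \(uvw\). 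Then \(w \in \Hull{\Set{u,v}} \setminus (\Set{u} \union \Set{v})\), so \(c_{\Tri}(G) = 2\).

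For the exchange number, Theorem~\ref{T:Sierksma} gives \(e_{\Tri}(G) \leq 3\). For the lower bound, take any two distinct vertices \(p, q\) and \(S = \Set{p,q}\). Then \(\Hull{S \setminus \Set{p}} \setminus \Hull{S \setminus \Set{q}} = \Set{q} \setminus \Set{p} \neq \emptyset\). So \(S\) is exchange independent, and \(e_{\Tri}(G) \in \Set{2,3}\).
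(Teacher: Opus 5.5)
Your proof is correct. The paper does not prove this statement itself; it is quoted from \cite{AnandAnilEtAl2025b}. Your argument does, however, rest on exactly the facts the paper takes for granted in the proof of Lemma~\ref{L:dominating-edge}:
\begin{itemize}
\item every edge of a \(2\)-connected chordal graph is a hull set;
\item an independent set is already \(\Tri\)-convex, so an independent set of size at least three is Carath\'eodory dependent;
\item Theorem~\ref{T:Sierksma} gives \(e_{\Tri}(G)\le c_{\Tri}(G)+1\).
\end{itemize}

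What you add is a self-contained proof of the edge-hull fact, which the paper only asserts. You use Dirac's simplicial-vertex theorem and delete a simplicial vertex \(w\notin\Set{u,v}\). The steps check out, including the observation that \(\Hull{\Set{u,v}}\cap V(G-w)\) is \(\Tri\)-convex in \(G-w\), and the final recovery of \(w\) from its clique neighbourhood.

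Your \(2\)-connectivity step is sound but can be stated more cleanly. For every vertex \(x\) of \(G-w\), the set \(N(w)\setminus\Set{x}\) is a nonempty clique, because \(\deg w\ge 2\). It therefore lies in a single component of \(G-w-x\). Hence a cut vertex of \(G-w\) would also be a cut vertex of \(G\).

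One wording fix: the triangle you use for \(c_{\Tri}(G)\ge 2\) does not come from \(2\)-connectivity alone; \(C_4\) is a counterexample. It comes from chordality. Alternatively, it follows directly from your lemma: \(\Hull{\Set{u,v}}=V(G)\) with \(\card{V(G)}\ge 3\) forces some vertex in the first iterate to form a triangle with \(u\) and \(v\).
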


They further compute the Carath\'eodory and exchange numbers for several classes of graph products~\cite[\S4]{AnandAnilEtAl2025b}.

\subsection{Our work}

In Section~\ref{S:gaps}, we highlight a gap in the proof of Theorem~\ref{T:Main}.
To the best of our knowledge, this gap does not seem to have a straightforward fix.
Instead, we use a different route to give a full proof of Theorem~\ref{T:Main} in Sections~\ref{S:Preliminaries} and~\ref{S:proof-main}, and thereby also obtain a full proof of Corollary~\ref{C:Main} by applying Theorem~\ref{T:Sierksma}.

As a consequence of our proof, in Appendix~\ref{A:new12} we characterize the extremal graphs attaining equality in Theorem~\ref{T:Main} and in Corollary~\ref{C:Main} as follows:
\begin{theorem}\label{T:New1}
	Let \(G\) be a connected graph in which every edge lies on a triangle.
	Then, \(c_{\Tri}(G) = t(G) + 1\) if and only if \(G\) is obtained from a full binary tree \(\mathcal{T}\) by adding an edge between every pair of siblings.
\end{theorem}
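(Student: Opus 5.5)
We will prove both directions of Theorem~\ref{T:New1} by analysing the hull-generation process in \(\Tri\)-convexity. The setting throughout is a connected graph \(G\) in which every edge lies on a triangle.

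Call a graph obtained from a full binary tree \(\mathcal{T}\) by joining siblings a \emph{sibling graph}. We first record the relevant counts for such a graph. Every internal node \(v\), with children \(a,b\), gives a triangle \(vab\), and these are the only triangles. Hence \(t(G)\) equals the number \(i\) of internal nodes, and the number of leaves is \(i+1\).

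\textbf{Sufficiency.} Let \(S\) be the set of leaves, so \(\card{S}=t(G)+1\). We will show that \(S\) is Carath\'eodory independent with the root \(r\) as witness.
\begin{enumerate}
\item \(r\in\Hull{S}\). By induction from the bottom, each internal node forms a triangle with its two children, so every internal node enters the hull.
\item \(r\notin\Hull{S\setminus\Set{x}}\) for each leaf \(x\). Removing the leaf \(x\) blocks every ancestor of \(x\) from entering the hull.
\end{enumerate}
For the second point we argue that the set
\[
V(G)\setminus\Set{x,\text{ancestors of }x}
\]
is convex. The only triangles meeting an ancestor \(u\) of \(x\) are the triangle at \(u\) and the triangle at the parent of \(u\). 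Each of these contains a second vertex on the \(x\)-to-root path, so neither triangle has two vertices in the set and a third vertex outside it.

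\textbf{Necessity.} Here we need the structure produced by the proof of Theorem~\ref{T:Main} from Sections~\ref{S:Preliminaries}--\ref{S:proof-main}. The plan is to rerun that proof while tracking where it can be tight. Take a Carath\'eodory independent \(S\) with \(\card{S}=t(G)+1\) and a witness \(w\in\Hull{S}\setminus\bigcup_{x}\Hull{S\setminus\Set{x}}\). Build \(\Hull{S}\) by successive triangle additions, and record for each vertex outside \(S\) the triangle that first adds it. This gives a derivation structure for \(w\). The structure is a binary tree whose leaves are elements of \(S\) and whose internal nodes are the added vertices, each joined to its two generating vertices.

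Independence forces every element of \(S\) to be used. If \(t(G)+1\) leaves are used, the derivation has at least \(t(G)\) internal nodes. Each internal node uses a triangle, so the counting argument should force:
\begin{enumerate}
\item the internal nodes use distinct triangles, and these exhaust all triangles of \(G\);
\item the leaves are distinct, with no repetitions or shortcuts;
\item no further triangles exist.
\end{enumerate}
From these we will deduce:
\begin{enumerate}
\item \(V(G)\) is exactly the vertex set of the derivation tree. Any extra vertex would, by connectivity and the assumption that every edge lies on a triangle, create an additional triangle.
\item The edges are exactly the parent--child edges together with the sibling edges. An extra edge lies on some triangle, and that triangle would be counted beyond \(t(G)\).
\end{enumerate}

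The main obstacle is justifying that the derivation is a genuine tree with \(t(G)\) distinct triangles, with no vertex reused in two roles. A vertex derived once might also serve as a generator in several places, which would make the structure a DAG rather than a tree. I would first try minimality arguments: if some vertex is reused, then either a leaf becomes redundant, giving \(w\in\Hull{S\setminus\Set{x}}\), or the number of triangles needed drops below \(\card{S}-1\). Both options contradict the assumptions. Making this precise likely requires the key inequality lemma from Section~\ref{S:proof-main} (roughly, "generating \(m\) independent leaves needs \(m-1\) triangles") together with an analysis of its equality case. I would state that equality analysis explicitly as a lemma before assembling the characterization.
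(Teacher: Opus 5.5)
Your sufficiency argument is correct and matches the paper's. Showing that the complement of the $x$--root path is convex is the same as the paper's observation that no vertex of that path can be the first to enter $\Hull{S\setminus\Set{x}}$.

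The necessity direction has a genuine gap, and it is exactly the one you flag. The step ``$t(G)+1$ leaves, hence at least $t(G)$ internal nodes, hence $t(G)$ distinct triangles'' uses the tree identity (leaves $=$ internal nodes $+1$). That identity is only available if no vertex of $G$ is used twice.
\begin{itemize}
\item If you unfold the derivation into an honest tree, its internal nodes may repeat, and then they no longer give distinct triangles.
\item If you keep it as a DAG, the identity simply does not hold.
\end{itemize}
Your proposed repair does not close the gap. It presupposes an inequality of the form ``a derivation of $w$ from $m$ indispensable generators uses at least $m-1$ distinct triangles,'' together with its equality case. Section~\ref{S:proof-main} contains no such lemma; it argues through exceptional components. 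Moreover, reuse does not make the triangle count drop. It pushes the number of derived vertices up to at least $\card{S}$.

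The paper closes necessity by a different route. Case~2 of the proof of Theorem~\ref{T:Main} gives $\card{S}\le t(G)$, so an extremal $S$ must fall under Case~1. One then contracts $xv$ and applies induction to $(G',S')$, with the strengthened hypothesis that the leaf set is exactly $S'$. Finally one shows that $x$ lies on a second triangle and re-expands $\bar v$ into $x$ with children $u,v$.

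The missing idea in your route is a short arc count, and it rescues your approach with far less work than the paper's.
\begin{itemize}
\item Add the vertices of $\Hull{S}\setminus S$ one at a time, each $p$ via a triangle $\Set{p,a_p,b_p}$ with $a_p,b_p$ already present.
\item Distinct $p$ receive distinct triangles. If $\Set{p,a_p,b_p}=\Set{p',a_{p'},b_{p'}}$ with $p$ added first, then $p'\in\Set{a_p,b_p}$ was present before $p$, which is absurd.
\item Draw arcs $a_p\to p$ and $b_p\to p$. Let $A$ consist of the witness $w$ and all its ancestors, and put $D=A\setminus S$.
\item Then $S\subseteq A$, since otherwise $w\in\Hull{S\setminus\Set{x}}$ for some $x$.
\item There are exactly $2\card{D}$ arcs inside $A$, and every vertex of $A\setminus\Set{w}$ has out-degree at least $1$ inside $A$.
\end{itemize}
Hence $2\card{D}\ge\card{A}-1=\card{D}+\card{S}-1$, that is, $\card{S}\le\card{D}+1\le t(G)+1$.

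If $\card{S}=t(G)+1$, both inequalities are equalities, which gives two facts.
\begin{itemize}
\item The triangles $\Set{p,a_p,b_p}$ for $p\in D$ are all the triangles of $G$.
\item Every vertex of $A\setminus\Set{w}$ has out-degree exactly $1$, and $w$ has out-degree $0$.
\end{itemize}
So the arcs form a tree rooted at $w$ in which each $p\in D$ has exactly the two children $a_p,b_p$, and $S$ is the leaf set. This is a full binary tree with no repeated vertices. Your last two deductions (no extra vertices, no extra edges) then complete the proof.

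This count also proves Theorem~\ref{T:Main} directly, bypassing the exceptional-component case analysis. It further yields the strengthened form used in the appendix: leaf set exactly $S$, root $w$.
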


\begin{theorem}\label{T:New2}
	Let \(G\) be a connected graph in which all but at most one edge lies on a triangle.
	Then, \(e_{\Tri}(G) = t(G) + 2\) if and only if \(G\) is obtained from a full binary tree \(\mathcal{T}\) by adding an edge between every pair of siblings, and then attaching a leaf to a vertex.
\end{theorem}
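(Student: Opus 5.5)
We will derive Theorem~\ref{T:New2} from Theorem~\ref{T:New1}, Theorem~\ref{T:Sierksma}, and the structural information produced by our proof of Theorem~\ref{T:Main}. The argument splits into a sufficiency direction and a necessity direction.

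\textbf{Sufficiency.} Let \(H\) be obtained from a full binary tree \(\mathcal{T}\) by joining siblings, and let \(G\) be \(H\) with a pendant leaf \(y\) attached at a vertex \(v\). We construct an exchange independent set of size \(t(G)+2\).

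Take a Carath\'eodory independent set \(S\) of \(H\) with \(\card{S}=t(H)+1\), as given by Theorem~\ref{T:New1}. We would choose \(S\) concretely as the set of leaves of \(\mathcal{T}\). Then:
\begin{itemize}
\item the hull of \(S\) climbs the tree triangle by triangle and reaches the root;
\item removing any leaf blocks the ascent along that branch, so the root (or a suitable vertex) lies in \(\Hull{S}\) but in no \(\Hull{S\setminus\{x\}}\).
\end{itemize}
Now set \(p=y\) and \(S'=S\cup\{y\}\). Since \(y\) lies on no triangle, we have \(\Hull{A\cup\{y\}}=\Hull{A}\cup\{y\}\) for every \(A\subseteq V(H)\). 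Hence \(\Hull{S'\setminus\{p\}}=\Hull{S}\) contains a witness vertex, and for \(x\in S\) we get \(\Hull{S'\setminus\{x\}}=\Hull{S\setminus\{x\}}\cup\{y\}\), which avoids that witness. So \(S'\) is exchange independent, and \(e_{\Tri}(G)\ge t(H)+2=t(G)+2\). The attachment point \(v\) is irrelevant, and the upper bound is Corollary~\ref{C:Main}.

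\textbf{Necessity.} Suppose \(e_{\Tri}(G)=t(G)+2\), and let \(S\) be exchange independent of that size with exchange vertex \(p\). Put \(T=S\setminus\{p\}\). The exchange condition says precisely that \(T\) is Carath\'eodory independent in the convexity restricted to \(\Hull{S}\) and its subsets. So \(\card{T}=t(G)+1\), and by the proof of Theorem~\ref{T:Main} every triangle of \(G\) must be ``used'' by the hull of \(T\). We aim to show two things:
\begin{enumerate}
\item \(p\) is not on any triangle;
\item \(G-p\) (after removing the possible non-triangle edge) is extremal for Theorem~\ref{T:New1}.
\end{enumerate}

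First suppose every edge lies on a triangle. We will show \(e_{\Tri}(G)\le t(G)+1\) in that case, by refining the counting in our proof of Theorem~\ref{T:Main}. In the tight case each triangle is entered exactly once during the hull-generation process. A vertex \(p\) on a triangle would then either enlarge \(\Hull{S\setminus\{x\}}\) enough to cover the witness, or would need to be generated by some triangle, which wastes a triangle. Thus some edge fails to lie on a triangle, and by hypothesis it is unique. Call it \(uw\); then it is a bridge-like edge whose endpoint \(p\) must lie outside every triangle.

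If \(p\) had another neighbour it would lie on a triangle or on another non-triangle edge, so \(p\) is a leaf. Then \(G-p\) is connected, every edge lies on a triangle, and \(t(G-p)=t(G)\). Since \(\Hull{T}\) and the hulls \(\Hull{T\setminus\{x\}}\) do not involve \(p\), the set \(T\) witnesses \(c_{\Tri}(G-p)\ge t(G-p)+1\). Theorem~\ref{T:New1} then identifies \(G-p\), and adding back the leaf \(p\) gives the stated form.

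\textbf{Main obstacle.} The hard step is the claim that \(p\) lies on no triangle, equivalently that \(e_{\Tri}(G)\le t(G)+1\) when every edge lies on a triangle. Our first approach will be to strengthen the invariant from the proof of Theorem~\ref{T:Main}: the number of triangles ``consumed'' in generating a witness from \(k\) independent vertices is at least \(k-1\), with equality only in the binary-tree configuration. We would then show that a point \(p\) of \(S\) lying on a triangle forces one extra triangle to be consumed.
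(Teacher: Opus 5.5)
Your sufficiency direction is correct and is the paper's argument: the leaf set of \(\mathcal T\) plus the pendant vertex is exchange independent, with the root as witness.

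The necessity direction has a genuine gap at the step you flag as the main obstacle: you never prove that \(p\) lies on no triangle. Of your two heuristics, the first (``\(p\) enlarges \(\Hull{S\setminus\{x\}}\) enough to cover the witness'') is the right mechanism, but you give no way to establish it. The invariant-strengthening plan you fall back on aims at the wrong target. When \(\card{R}=t(G)+1\) with \(R=S\setminus\{p\}\), every triangle is already accounted for, so there is no ``extra triangle'' for \(p\) to consume.

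The missing idea is to use Theorem~\ref{T:New1} in the strengthened form established in its proof: \(R\) is \emph{exactly} the leaf set of the full binary tree \(\mathcal T\). The bare statement, which only identifies \(G\), does not suffice. The contradiction then goes as follows:
\begin{enumerate}
\item By Lemma~\ref{L:tree-hulls}, the witness \(q\) must be the root.
\item Since \(V(G)=V(\mathcal T)\) and \(p\notin R\), the vertex \(p\) is an internal node of \(\mathcal T\).
\item For a leaf \(a\) below \(p\), the set \(S\setminus\{a\}=(R\cup\{p\})\setminus\{a\}\) regenerates the root. The vertex \(p\) bypasses the missing branch, and from \(p\) upward each ancestor is generated using the intact sibling branch.
\end{enumerate}
Thus \(q\in\Hull{S\setminus\{a\}}\), contradicting the choice of \(q\).

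Your passage from this to ``the non-triangle edge is incident with \(p\) and \(p\) lies on no triangle'' is also unargued. The equivalence you assert with the all-edges-on-triangles case is true, but it needs proof. The paper's route is:
\begin{enumerate}
\item Delete the unique non-triangle edge \(f\); this changes no hull. Let \(C\) be the component of \(G-f\) containing \(q\).
\item Every vertex of \(R\) lies in \(C\), since otherwise deleting it would not affect \(q\). Hence \(t(G)+1=\card{R}\le t(C)+1\), so \(C\) contains every triangle and \(R\) is extremal in \(C\).
\item The tree argument above, applied inside \(C\), rules out \(p\in V(C)\).
\item The component of \(p\) in \(G-f\) then contains no triangle, hence no edge. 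So \(f\) is a pendant edge at \(p\), and \(C=G-p\).
\end{enumerate}
From there your last step, applying Theorem~\ref{T:New1} to \(G-p\), goes through.
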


Theorems~\ref{T:New1} and~\ref{T:New2} strengthen the tightness results in~\cite[Propositions 2.4 and 3.8]{AnandAnilEtAl2025b} by identifying \emph{all} extremal graphs under the respective hypotheses.
Furthermore, the computations in Theorem~\ref{T:block} are partly incorrect, because the proofs inadvertently use the tightness results~\cite[Propositions 2.4 and 3.8]{AnandAnilEtAl2025b} as characterizations of the extremal graphs.
In Appendix~\ref{A:new345}, we use Theorems~\ref{T:New1} and~\ref{T:New2} to determine the Carath\'eodory and exchange numbers of block graphs exactly:

\begin{theorem}\label{T:New3}
For a block graph \(G\), let \(H_0\) denote a maximal connected block subgraph without any \(K_2\)-blocks, and let \(H_1\) denote a maximal connected block subgraph with exactly one \(K_2\)-block, which is also a pendant block of \(H_1\).
Then, \(c_{\Tri}(G) \leq \max_{H_0}\{b(H_0)\} + 1\) and \(e_{\Tri}(G) \leq \max_{H_0,H_1}\Set{b(H_0),b(H_1)} + 1\), where \(b(H_i)\) is the number of blocks of \(H_i\).
\end{theorem}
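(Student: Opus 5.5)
We plan to reduce the statement to Theorem~\ref{T:Main} and Corollary~\ref{C:Main} by localizing a Carath\'eodory or exchange independent set of a block graph inside a single subgraph of type \(H_0\) or \(H_1\). The first step is a localization lemma for \(\Tri\)-hulls in a block graph \(G\). Every triangle of \(G\) lies inside a single block, and each block is a clique. Hence \(\Hull{S}\) is obtained as follows. Inside any non-\(K_2\) block \(B\) containing at least two vertices of the current set, the whole of \(B\) is added. No other vertices are added, and \(K_2\)-blocks never propagate. In particular, if \(S\) is Carath\'eodory independent and \(v \in \Hull{S}\setminus\Union_{x}\Hull{S\setminus\{x\}}\), then the sequence of blocks filled on the way to \(v\) forms a connected union of non-\(K_2\) blocks. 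Call this union \(H\); it lies inside some \(H_0\). We will also show that \(S \subseteq V(H)\), or that \(S\) can be replaced by \(S \intersect V(H)\) with the same witness \(v\). The reason is that a vertex of \(S\) outside every block used in generating \(v\) is redundant, and its existence would put \(v\) in \(\Hull{S\setminus\{x\}}\). So \(S\) is Carath\'eodory independent in the induced subgraph \(H\), whose hulls agree with those of \(G\) restricted to \(H\), by the same block argument.

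The second step is to bound \(c_{\Tri}(H)\) by \(b(H)+1\) rather than by \(t(H)+1\), since a block \(K_m\) with \(m\ge 4\) has many triangles. For this we compress each block to a triangle. Choose in each block \(B\) of \(H\) the (at most three) vertices relevant to the generation of \(v\): the cut vertices used, the vertices of \(S\) in \(B\), and \(v\) itself. Show that at most three such vertices are needed in any block, because a minimal generation enters a block through two vertices and leaves through one. Then pass to the induced subgraph \(H'\) on these vertices. \(H'\) is a block graph in which every block is a triangle, or smaller. In \(H'\), \(S\) remains Carath\'eodory independent, and \(t(H') \le b(H)\). Theorem~\ref{T:Main} then gives \(\card{S} \le t(H')+1 \le b(H_0)+1\).

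For the exchange number we argue analogously, using \(\Hull{S\setminus\{p\}}\). A witness \(v\) lies in the hull of \(S\setminus\{p\}\) but not in the hull of any \(S\setminus\{x\}\) with \(x\neq p\). Then \(S\setminus\{p\}\) localizes to some \(H_0\), and the role of \(p\) is that it must meet this structure. Either \(p\) lies in a non-\(K_2\) block adjacent to the generation, or \(p\) is attached through a single \(K_2\)-block, which is pendant. This gives a subgraph of type \(H_1\) (or \(H_0\)) that contains \(S\). After compressing blocks to triangles as before, Corollary~\ref{C:Main} yields \(\card{S}\le t+2\), where \(t\) counts the triangle blocks. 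Here \(t\) is at most the number of non-\(K_2\) blocks, which equals \(b(H_1)-1\) or \(b(H_0)\). Hence \(\card{S}\le b(H_1)+1\) or \(\card{S}\le b(H_0)+2\).

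The main obstacle is this last bookkeeping. The case where \(p\) lies in an \(H_0\)-type configuration seems to give \(b(H_0)+2\), which would exceed the stated bound. There we will need the finer fact, provable directly, that an exchange set exceeding \(c\) forces a vertex of \(S\) to be separated from the rest. In a \(K_2\)-free block graph, that separation costs one block. We expect to handle this by a direct case analysis on where \(p\) sits relative to the generating chain.
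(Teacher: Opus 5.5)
Your Carath\'eodory half is correct and is essentially the paper's argument. Lemma~\ref{L:block-skeleton} does exactly what you describe: trace a minimal derivation of the witness, keep one triangle per block, and apply Theorem~\ref{T:Main}.

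The exchange half, however, leaves open the one case that carries all the content. Put \(R=S\setminus\{p\}\). It is Carath\'eodory independent with the same witness \(q\), so \(\card{R}\le b(H_0)+1\) for the \(H_0\) containing \(R\). If \(p\notin V(H_0)\) or \(\card{R}\le b(H_0)\), the bound is immediate. So everything hinges on ruling out \(p\in V(H_0)\) together with \(\card{R}=b(H_0)+1\).

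Your ``finer fact'' (that an oversized exchange set forces a separation costing one block) only restates this, and you give no mechanism for it. As you noticed, counting alone via Corollary~\ref{C:Main} cannot supply it. The missing idea is structural information about the equality case:
\begin{enumerate}
\item When \(\card{R}=b(H_0)+1\), the skeleton \(F\) has exactly one triangle in every block of \(H_0\) and \(\card{R}=t(F)+1\).
\item By the strengthened form of Theorem~\ref{T:New1} and Lemma~\ref{L:tree-hulls}, \(F\) is then the sibling-edge graph of a full binary tree whose leaf set is exactly \(R\) and whose root is \(q\). In a block graph you can also see this directly: a minimal derivation of \(q\) is a full binary tree whose leaves are exactly the vertices of \(R\).
\item If \(p\in V(F)\), then \(p\) is an internal vertex. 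Deleting a leaf \(a\) below \(p\) still lets \(q\) be generated from \(S\setminus\{a\}\).
\item If \(p\notin V(F)\), let \(B\) be its block, with \(F\)-triangle \(\{z,x,y\}\) where \(z\) is the parent, and delete a leaf \(a\) below \(x\). Then \(y\) is still generated. Since \(B\) is a clique, \(p,y\) generate \(z\), and the intact sibling branches carry this up to \(q\).
\end{enumerate}
In both cases \(q\in\Hull{S\setminus\{a\}}\) with \(a\in R\), contradicting exchange independence.

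Separately, your description of where \(p\) can sit is wrong. It need not lie in a block adjacent to the generating structure, nor behind a single pendant \(K_2\)-block. It can be arbitrarily far away, for instance at the end of a long path hanging off a triangle. Then no subgraph of type \(H_1\) contains \(S\), so you cannot apply Corollary~\ref{C:Main} to such a subgraph. This case is nonetheless harmless and needs no compression: \(\card{S}=\card{R}+1\le b(H_0)+2\), and \(H_0\) together with the first \(K_2\)-block on a path from \(H_0\) to \(p\) lies in some \(H_1\) with \(b(H_1)\ge b(H_0)+1\).
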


We also show in Appendix~\ref{A:new345} that our proof method yields the following characterizations of the extremal graphs attaining equality in the bounds of Theorem~\ref{T:New3}.
\begin{theorem}\label{T:New4}
Let \(G\) be a connected block graph without any \(K_2\)-blocks.
Then, \(c_{\Tri}(G) = b(G)+1\) if and only if \(G\) is obtained from a full binary tree with \(c_{\Tri}(G)\) many leaves by adding an edge between every pair of siblings, and then repeatedly adding vertices adjacent to selected cliques.	
\end{theorem}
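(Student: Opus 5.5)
We treat the two implications separately. The heavy lifting goes into the ``only if'' direction, and we reduce it to Theorem~\ref{T:New1}.

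\textbf{Setup.} Let \(G\) be a connected block graph without \(K_2\)-blocks. Every block is then a clique \(K_m\) with \(m \geq 3\), so every edge lies on a triangle. The first thing we need is a structural description of the hull in such graphs. If \(S\) contains two vertices of a block, then \(\Hull{S}\) contains that whole block, because every third vertex of the block forms a triangle with those two. Every triangle of a block graph lies inside a single block. Consequently, \(\Hull{S}\) is the union of \(S\) with all blocks that are ``reached''. Here a block is reached if it contains two vertices of the current hull, and reaching is iterated; this propagates along chains of blocks through cut vertices.

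\textbf{The ``if'' direction.} Let \(S\) be a Carath\'eodory independent set of size \(b(H)+1\) in the core \(H\), as supplied by the full binary tree construction behind Theorem~\ref{T:New1}. Note that \(b(H)\) equals the number of triangles of \(H\). We then track what happens when a new vertex is attached adjacent to a selected clique. Such an attachment either enlarges an existing block or creates a new block sharing one vertex. We must check that the witness vertex \(w \in \Hull{S} \setminus \Union_{x}\Hull{S\setminus\Set{x}}\) survives. The hull behaviour of \(S\) and of each \(S \setminus \Set{x}\) is unchanged except that enlarged or new blocks are absorbed. So \(c_{\Tri}(G) \geq b(H)+1\). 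The ``selected cliques'' must be read so that \(b(G)\) does not grow, meaning the new vertex enlarges an existing block. Otherwise the equality \(c_{\Tri}(G)=b(G)+1\) would fail. Combined with the upper bound of Theorem~\ref{T:New3}, this gives equality.

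\textbf{The ``only if'' direction.} Suppose \(S\) is Carath\'eodory independent with \(\card{S} = b(G)+1\). First, we shrink each block \(B\) to a triangle. We choose three vertices of \(B\) that include all cut vertices of \(B\) lying on the hull-propagation paths used by \(S\), together with the vertices of \(S\) in \(B\). Deleting the remaining vertices preserves independence, and it leaves a graph \(G'\) with \(t(G') = b(G)\) and \(c_{\Tri}(G') \geq b(G)+1 = t(G')+1\). By Theorem~\ref{T:New1}, \(G'\) is a full binary tree with sibling edges. In that graph each triangle consists of a parent and two siblings, and \(S\) consists of the leaves, so \(\card{S}\) equals the number of leaves, namely \(c_{\Tri}(G)\). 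Reinserting the deleted vertices exhibits \(G\) as \(G'\) plus vertices attached to existing cliques.

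\textbf{Main obstacle.} The hardest step is the reduction to triangles. We need that each block contains at most three ``relevant'' vertices: at most \(2\) from \(S\) plus cut vertices used in propagation. We also need that deleting the other vertices neither destroys the witness nor creates new hull containments. We would first try to prove, via minimality of \(S\), that each block meets \(S\) in at most \(2\) vertices and has at most \(3\) relevant attachment points. Four relevant attachment points would let us drop a vertex of \(S\) without changing the hull.
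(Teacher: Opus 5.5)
\textbf{The gap is in your forward (only if) direction.} Your converse is the paper's. Enlarging a block cannot change the closure on the original vertices, so the root still witnesses independence of the leaf set, and Theorem~\ref{T:New3} gives the matching upper bound. Your outline for the forward direction also matches the paper's: pass to a triangle-block subgraph, invoke Theorem~\ref{T:New1}, then reinsert the deleted vertices.

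The problem is the reduction step. You leave it open, and as you formulate it, it is not a well-defined operation with the properties you claim. You cannot choose three vertices per block independently, because a vertex kept for one block stays in every block containing it. A block with four or more cut vertices then forces a dilemma:
\begin{itemize}
\item either one of its cut vertices is deleted, which disconnects \(G'\), and Theorem~\ref{T:New1} needs \(G'\) connected;
\item or that cut vertex survives because a neighbouring block kept it, so the block meets \(V(G')\) in at least four vertices and \(t(G')>b(G)\).
\end{itemize}
Your final step, that reinsertion only enlarges existing cliques, also presupposes that every deleted vertex is a non-cut vertex lying in a single block. In the extremal case none of these bad configurations occurs, but that is exactly what has to be proved.

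You also misplace the difficulty: the per-block bound needs no minimality argument. Trace one derivation of the witness \(q\) back through the block--cut tree rooted towards \(q\). Each block is used only to produce its root-ward vertex. A clique fills as soon as two of its vertices are in the hull, so two supporting vertices suffice, and at most one triangle per block is ever needed.

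\textbf{The missing idea is to keep only those triangles and then count.} This is what Lemma~\ref{L:block-skeleton} and the paper's proof do.
\begin{itemize}
\item The union \(F\) of the traced triangles is connected by construction and has at most one triangle in each block.
\item Every \(a\in S\) occurs in \(F\), since otherwise \(q\in\Hull{S\setminus\Set{a}}\).
\item \(S\) remains Carath\'eodory independent in \(F\) with the same witness, because hulls computed in a subgraph are contained in those computed in \(G\).
\end{itemize}
Applying Theorem~\ref{T:Main} to \(F\) gives \(b(G)+1=\card{S}\le t(F)+1\le b(G)+1\). Hence \(F\) has exactly one triangle in every block of \(G\).

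Only at this point are your assumptions justified. \(F\) is connected with \(c_{\Tri}(F)=t(F)+1\), so Theorem~\ref{T:New1} applies. Since \(F\) is connected and meets every block, no cut vertex of \(G\) lies outside \(V(F)\), as it would separate two triangles of \(F\). So every vertex of \(G-V(F)\) lies in exactly one block, and \(G\) is obtained from \(F\) by enlarging blocks.
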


\begin{theorem}\label{T:New5}
Let \(G\) be a connected block graph with exactly one \(K_2\)-block, and
suppose that this block is pendant.
Then, \(e_{\Tri}(G) = \max_{H_0,H_1}\{b(H_0),b(H_1)\}+1 = c_{\Tri}(G) + 1\) if and only if \(G\) is obtained from a full binary tree with \(c_{\Tri}(G)\) many leaves by adding an edge between every pair of siblings, and then repeatedly adding vertices adjacent to selected cliques, and then attaching a leaf to one vertex.
\end{theorem}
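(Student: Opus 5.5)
We plan to reduce Theorem~\ref{T:New5} to Theorem~\ref{T:New4} and to the analysis behind Theorem~\ref{T:New2}. Let \(G\) have exactly one \(K_2\)-block \(uw\), pendant with leaf \(w\), and set \(G' = G - w\). Then \(G'\) is a connected block graph with no \(K_2\)-blocks, and \(b(G) = b(G')+1\).

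\textbf{Step 1 (reduce to \(G'\)).} Since \(\Set{w}\) and \(V(G')\) are both convex and \(w\) lies on no triangle, for any \(S \subseteq V(G)\) we have \(\Hull{S} = \Hull{S \setminus \Set{w}} \union (S \intersect \Set{w})\).
\begin{itemize}
\item It follows that \(c_{\Tri}(G) = c_{\Tri}(G')\): a Carath\'eodory independent set of size at least \(2\) cannot contain \(w\), because removing any other point cannot remove \(w\) from the hull.
\item For exchange independence, a set \(S\) containing \(w\) is exchange independent exactly when \(p = w\) and \(S \setminus \Set{w}\) is Carath\'eodory independent in \(G'\). Hence \(e_{\Tri}(G) = \max\Set{e_{\Tri}(G'),\, c_{\Tri}(G')+1}\).
\item By Theorem~\ref{T:Sierksma}, \(e_{\Tri}(G') \leq c_{\Tri}(G')+1\), so \(e_{\Tri}(G) = c_{\Tri}(G)+1\) always holds in this setting. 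This gives the second equality in the statement for free.
\end{itemize}

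\textbf{Step 2 (identify the maximum).} Here \(G\) itself is the unique maximal \(H_1\), with \(b(H_1) = b(G) = b(G')+1\). The \(H_0\)'s are subgraphs of \(G'\), so \(\max_{H_0,H_1}\Set{b(H_0),b(H_1)} + 1 = b(G')+2\). The condition in Theorem~\ref{T:New5} therefore becomes \(c_{\Tri}(G') + 1 = b(G') + 2\), that is, \(c_{\Tri}(G') = b(G')+1\).

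\textbf{Step 3 (apply Theorem~\ref{T:New4}).} By Theorem~\ref{T:New4}, \(c_{\Tri}(G') = b(G')+1\) holds exactly when \(G'\) is obtained from a full binary tree with \(c_{\Tri}(G')\) leaves by adding sibling edges and then repeatedly adding vertices adjacent to selected cliques. Reattaching the leaf \(w\) at the vertex \(u\) gives the claimed description, and since \(c_{\Tri}(G) = c_{\Tri}(G')\), the number of leaves matches. The converse is the same chain of equivalences read backwards. We must check that the description in the theorem forces the pendant \(K_2\) to be the added leaf; this holds because the clique-adding operations never create \(K_2\)-blocks.

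\textbf{Main obstacle.} The delicate step is Step 1, specifically verifying the hull decomposition and the exchange-independence characterization when \(w \in S\). In particular we must rule out choices \(p \neq w\) for sets containing \(w\): then \(w\) lies in \(\Hull{S\setminus\Set{x}}\) for every \(x \neq w\), so those hulls absorb \(w\), and the difference set reduces to one in \(G'\). There we bound its size by \(e_{\Tri}(G') \leq c_{\Tri}(G')+1\). We will also double-check Step 2 against the intended reading of "maximal \(H_1\)" in Theorem~\ref{T:New3}, since the statement's first equality relies on it.
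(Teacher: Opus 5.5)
Your proposal is correct and is essentially the paper's proof: delete the pendant leaf, use $c_{\Tri}(G)=c_{\Tri}(G')$ and $e_{\Tri}(G)=c_{\Tri}(G')+1$ (lower bound from $R\cup\Set{w}$ with distinguished vertex $w$), reduce the equality to $c_{\Tri}(G')=b(G')+1$, and invoke Theorem~\ref{T:New4}. The one simplification is that the paper applies Theorem~\ref{T:Sierksma} directly to $G$, which gives $e_{\Tri}(G)\le c_{\Tri}(G)+1=c_{\Tri}(G')+1$ at once, so your ``main obstacle'' of classifying exchange independent sets containing $w$ never arises. If you keep that classification, no appeal to $e_{\Tri}(G')$ is needed: for $\card{S}\ge3$ and $p\neq w$, the $x=w$ term contains $\Hull{S\setminus\Set{p,w}}$ and any other $x$ contributes $w$, so the difference set is empty.
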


Lastly, if \(G\) is a \(k\)-tree with \(k \geq 2\), then \(G\) is a \(2\)-connected chordal graph.
Hence, by Theorem~\ref{T:chordal}, \(c_{\Tri}(G) = 2\) and \(e_{\Tri}(G) \in \Set{2, 3}\).
In Appendix~\ref{A:new6}, we characterize exactly the \(k\)-trees, for \(k \geq 2\), for which \(e_{\Tri}(G) = 2\).
\begin{theorem}\label{T:k-tree}
	Let \(G\) be a \(k\)-tree, where \(k \geq 2\).
	Then, \(e_{\Tri}(G) = 2\) if \(G\) is a split graph, and \(e_{\Tri}(G) = 3\) otherwise.
\end{theorem}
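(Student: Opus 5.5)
By Theorem~\ref{T:chordal}, \(e_{\Tri}(G)\in\Set{2,3}\) for every \(k\)-tree with \(k\geq 2\). So the whole task is to decide exactly when \(V(G)\) contains an exchange independent set of size \(3\). I would first reduce this to a purely combinatorial condition by computing \(\Tri\)-hulls of pairs of vertices, and then compare that condition with splitness.

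\textbf{Step 1 (hulls of pairs).} If \(a,b\) are nonadjacent, no vertex forms a triangle with \(a\) and \(b\), so \(\Hull{\Set{a,b}}=\Set{a,b}\). If \(a,b\) are adjacent, I claim \(\Hull{\Set{a,b}}=V(G)\). I would prove this by induction along a construction order of the \(k\)-tree.
\begin{itemize}
\item In the starting \((k+1)\)-clique, any edge spreads by triangles to every vertex of the clique, because \(k+1\geq 3\).
\item A newly added vertex is adjacent to a \(k\)-clique with \(k\geq 2\), so it forms a triangle with two vertices already in the hull.
\item An edge incident to the new vertex \(v\) lies in a triangle with a vertex of its attaching clique. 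This brings in an old edge, and the inductive hypothesis applies to that edge.
\end{itemize}

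\textbf{Step 2 (criterion for \(e_{\Tri}(G)=3\)).} Let \(S=\Set{p,a,b}\) with distinguished vertex \(p\). Then \(S\) is exchange independent iff
\[
\Hull{\Set{a,b}}\setminus\bigl(\Hull{\Set{p,a}}\union\Hull{\Set{p,b}}\bigr)\neq\emptyset .
\]
By Step 1 this forces three things:
\begin{itemize}
\item \(ab\in E(G)\), since otherwise the left set is \(\Set{a,b}\), which is covered by the two hulls;
\item \(p\) is nonadjacent to both \(a\) and \(b\), since otherwise one of the hulls is \(V(G)\);
\item conversely, under these two conditions the set is \(V(G)\setminus\Set{p,a,b}\), which is nonempty because such a configuration already needs \(n\geq k+2\geq 4\).
\end{itemize}
Hence \(e_{\Tri}(G)=3\) iff there is an edge \(ab\) and a vertex \(p\notin N[a]\union N[b]\).

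\textbf{Step 3 (relating this to splitness), which I expect to be the main obstacle.} For the direction ``not split \(\Rightarrow e_{\Tri}(G)=3\)'':
\begin{itemize}
\item I would use that a chordal graph is split iff it has no induced \(2K_2\). A \(k\)-tree is chordal and has no \(C_4\) or \(C_5\).
\item An induced \(2K_2\), say edges \(ab\) and \(pq\), immediately gives a vertex \(p\) nonadjacent to the edge \(ab\).
\end{itemize}
For the converse ``split \(\Rightarrow e_{\Tri}(G)=2\)'', I would take a split partition \(K\union I\) with \(\card{K}=k+1\) maximal. Every vertex of \(I\) has degree at least \(k\), so it misses at most one vertex of \(K\). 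I would then try to rule out an edge \(ab\) with \(p\) nonadjacent to both:
\begin{itemize}
\item if \(a,b\in K\), then \(p\) is adjacent to at least one of them;
\item if \(p\in K\), then \(p\) is adjacent to the \(K\)-endpoint;
\item the remaining case \(p\in I\), \(a\in I\), \(b\in K\), with \(b\) the unique non-neighbour of \(p\) in \(K\), is delicate.
\end{itemize}
The remaining case is a genuine concern. The split \(2\)-tree built as follows seems to realize it:
\begin{itemize}
\item start from the triangle \(xyz\);
\item attach \(p\) to the edge \(xy\);
\item attach \(a\) to the edge \(xz\).
\end{itemize}
Here \(\Set{p,a,z}\) with distinguished vertex \(p\) seems exchange independent, with \(y\) as the witness.

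So before finishing I would recheck the notion of ``split'' intended, and the hull computation in Step 1, against this example. If the example stands, the correct dichotomy would have to be phrased via the Step 2 criterion, namely the existence of a vertex whose non-neighbourhood is not independent, rather than via splitness.
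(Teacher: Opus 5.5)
Your Steps~1 and~2 are correct. They coincide with the paper's own reduction, its lemma that a $2$-connected chordal graph has $e_{\Tri}(G)=2$ exactly when every edge $ab$ satisfies $N[a]\cup N[b]=V(G)$. Your $2K_2$ argument also correctly gives ``not split $\Rightarrow e_{\Tri}(G)=3$''.

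Your worry about the remaining case is justified: your example is a genuine counterexample, so the statement as written is false and no argument can close that case. The set $\Set{x,y,z}$ is a clique and $\Set{p,a}$ is independent, so $G$ is a split $2$-tree. However, $\Hull{\Set{a,z}}=V(G)$, since the triangles $azx$, $xzy$, $xyp$ fire in turn. On the other hand, $\Hull{\Set{p,z}}=\Set{p,z}$ and $\Hull{\Set{p,a}}=\Set{p,a}$, because $pz,pa\notin E(G)$. Hence $\Set{p,a,z}$ is exchange independent with distinguished vertex $p$ and witnesses $x,y$, so $e_{\Tri}(G)=3$. Equivalently, the edge $az$ fails to dominate $p$.

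What the paper's argument actually proves is the narrower statement for \emph{complete} split graphs. After the dominating-edge reduction, it argues by induction along a construction order:
\begin{enumerate}
\item delete a last vertex $v$, whose neighbourhood $Q$ is a $k$-clique;
\item note that all edges remain dominating in $G-v$, so $G-v=C\vee I$ by induction;
\item if $Q\neq C$, domination of the edge $vy$ forces $I=\Set{y}$.
\end{enumerate}
This shows that a $k$-tree with every edge dominating is $K_k\vee\overline{K}_{n-k}$. The converse is immediate, since every edge contains a universal vertex. Your example has only one universal vertex, $x$, so it is split but not of this form. The word ``split'' in the statement should therefore read ``complete split, i.e.\ $G\cong K_k\vee\overline{K}_{n-k}$''; the proof never mentions splitness.

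Your own route proves this corrected version directly, without the induction on $n$. Take a split partition $K\cup I$ with $\card{K}=k+1$. Each $p\in I$ has degree exactly $k$, since seeing all of $K$ would create a $K_{k+2}$. So $p$ misses a unique vertex $b_p\in K$. Your case analysis shows that the only non-dominating configuration is two vertices $p,a\in I$ with $b_a\neq b_p$; then the edge $ab_p$ misses $p$. Hence $e_{\Tri}(G)=2$ if and only if all the $b_p$ coincide, say with $b$. That is exactly $G=(K\setminus\Set{b})\vee(I\cup\Set{b})\cong K_k\vee\overline{K}_{n-k}$. Together with your $2K_2$ step for non-split $G$, this gives the correct dichotomy.
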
 

\section{The gaps in Theorems~\ref{T:Main} and~\ref{T:block}}\label{S:gaps}

The outline of the proof of Theorem~\ref{T:Main} (Theorem 2.3 in \cite{AnandAnilEtAl2025b}) goes as follows.
Suppose that \(t(G) \geq 2\), since the result is easily seen to be true for \(t(G) = 0\) and \(1\).
Suppose that \(S\) is a Carath\'eodory independent set in \(G\).
It is easy to see that every vertex of \(S\) belongs to some triangle and \(S\) cannot contain any triangle, else it would be Carath\'eodory dependent.
If \(\card{S} \geq t(G) + 2\), then by the pigeonhole principle there are at least two triangles that contain exactly two vertices of \(S\).
Now, the authors argue that if \(T_1,\dotsc,T_r\) (\(r \geq 2\)) are all such triangles in \(G\), say with \(T_i\) containing the vertices \(u_i,v_i \in S\), then
\begin{equation}\label{Eq:problem}
	\Hull{S} = \Union_{i = 1}^r \bigl( \Hull{S \setminus \Set{u_i}} \union \Hull{S \setminus \Set{v_i}}\bigr),
\end{equation}
which contradicts that \(S\) is Carath\'eodory independent.
Hence, \(\card{S} \leq t(G) + 1\), as required.

We found that \eqref{Eq:problem} is not easily justified, and a further argument is necessary to close this gap.
Our proof follows a different approach that does not require one to establish \eqref{Eq:problem}.
It has the added advantage that it allows us to precisely determine the graphs that attain equality in Theorem~\ref{T:Main}.

Next, the tightness result for the Carath\'eodory number (Proposition 2.4 in \cite{AnandAnilEtAl2025b}) shows that a chain of triangles attains the equality; for example, Figure~\ref{fig:triangle-chain-four} shows a graph for which \(c_{\Tri}(G) = 5 = t(G) + 1\).
However, it is easy to see that the blocks (in this case, triangles) need not lie in a chain for equality to hold: Figure~\ref{fig:block-counterexample} shows a graph in which \(c_{\Tri}(G) = 5 = t(G) + 1\), but the triangles do not all lie on a chain.
Similar counterexamples to some of the cases of Theorem~\ref{T:block} can be constructed for the exchange number as well.

\section{Preliminaries}\label{S:Preliminaries}

For a set \(S \subseteq V(G)\), the \emph{\(\Tri\)-interval} of \(S\), \(I_{\Tri}(S)\), is defined as
\[
I_{\Tri}(S) \defn S \cup \Set{x \in V(G) : \exists\, y, z \in S \text{ such that } \{x,y,z\} \text{ is a triangle of } G}.
\]
Setting \(I_{\Tri}^0(S) \defn S\) and \(I_{\Tri}^{n+1}(S) \defn I_{\Tri}(I_{\Tri}^n(S))\), we have that \(\Hull{S}\) is the eventual stabilisation \(\bigcup_{n \ge 0} I_{\Tri}^{n}(S)\), which is reached in finitely many steps since \(G\) is a finite graph.
The \emph{iteration time} of \(S\) is the minimum \(n\) such that \(I_{\Tri}^{n}(S) = \Hull{S}\).

We recall two basic structural properties of Carath\'eodory independent sets:

\begin{lemma}\label{L:basic}
Let \(S \subseteq V(G)\) be a Carath\'eodory independent set. Then:
\begin{enumerate}
\item\label{L:basic1} If \(\card{S} \geq 2\), then every vertex of \(S\) lies in a triangle of \(G\).
\item\label{L:basic2} No triangle of \(G\) is contained in \(S\).
\end{enumerate}
\end{lemma}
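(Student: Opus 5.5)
Both parts come from one observation: if some \(s \in S\) is, in a suitable sense, ``redundant'', then \(\Hull{S} = \Hull{S \setminus \Set{s}}\). In that case every point of \(\Hull{S}\) lies in \(\Union_{x \in S} \Hull{S \setminus \Set{x}}\), which contradicts Carath\'eodory independence because \(\Hull{S}\) is nonempty. We also use the standard monotonicity fact that \(A \subseteq B\) implies \(\Hull{A} \subseteq \Hull{B}\), and that \(\Hull{\cdot}\) is idempotent.

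For part~\ref{L:basic2}, suppose a triangle \(\Set{a,b,c}\) is contained in \(S\). Then \(\card{S} \geq 3\) and \(c \in \Itr{\Set{a,b}} \subseteq \Hull{S \setminus \Set{c}}\), so \(S \subseteq \Hull{S \setminus \Set{c}}\). Monotonicity and idempotence give \(\Hull{S} \subseteq \Hull{S \setminus \Set{c}}\), hence the two hulls are equal. This is the contradiction described above.

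For part~\ref{L:basic1}, let \(\card{S} \geq 2\) and suppose \(v \in S\) lies in no triangle. Such a \(v\) can never be added by the interval operator, since that requires \(v\) to form a triangle with two other vertices. Moreover, \(v\) can never help add another vertex \(x\), because that would require \(\Set{x,v,y}\) to be a triangle containing \(v\). Hence \(\Hull{S} = \Set{v} \union \Hull{S \setminus \Set{v}}\); to check this, verify that the right-hand side is convex, using that any triangle meeting it in two vertices cannot use \(v\). Now let \(z\) be any point of \(\Hull{S}\). If \(z = v\), then \(z \in \Hull{S \setminus \Set{w}}\) for any other \(w \in S\); such a \(w\) exists because \(\card{S} \geq 2\). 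Otherwise, \(z \in \Hull{S \setminus \Set{v}}\). Either way \(z\) lies in the union, which is the contradiction.

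The only step needing care is the convexity check in part~\ref{L:basic1}, and it is routine. Everything else is immediate from the definitions.
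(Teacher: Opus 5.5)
Your proposal is correct and follows the same route as the paper. In both parts you exhibit some \(s \in S\) whose removal loses nothing from the hull: in part 2 you show \(\Hull{S} = \Hull{S\setminus\Set{s}}\), and in part 1 you show \(\Hull{S} = \Hull{S\setminus\Set{v}} \union \Set{v}\). Either way, no point of \(\Hull{S}\) lies outside \(\Union_{x\in S}\Hull{S\setminus\Set{x}}\). You are slightly more explicit than the paper: you verify convexity of \(\Set{v}\union\Hull{S\setminus\Set{v}}\) and use \(\card{S}\geq 2\) to cover the point \(v\) itself. You also correctly use just one vertex of the triangle in part 2, so the paper's appeal to symmetry is unnecessary.
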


\begin{proof}
If \(u \in S\) does not lie in any triangle, then \(\Hull{S} = \Hull{S \setminus \Set{u}} \cup \Set{u}\),
so \(S\) is not Carath\'eodory independent, a contradiction.

If \(u,v,w \in S\) form a triangle, then since \(v, w \in S \setminus \Set{u}\) and \(uvw\) is a triangle, we have \(u \in \Itr{S \setminus \Set{u}} \subseteq \Hull{S \setminus \Set{u}}\), so \(\Hull{S} = \Hull{S \setminus \Set{u}}\).
By symmetry, \(\Hull{S} = \Hull{S \setminus \Set{v}} = \Hull{S \setminus \Set{w}}\) as well, contradicting that \(S\) is Carath\'eodory independent.
\end{proof}

Let \(S \subseteq V(G)\) be a Carath\'eodory independent set, and consider the induced subgraph \(G[S]\).
For each connected component \(H\) of \(G[S]\), define
\[
\mathcal{T}_H \defn \Set{T : T \text{ is a triangle of } G \text{ with } V(T)\cap V(H)\neq \emptyset}.
\]

\begin{lemma}\label{L:partition}
If \(H\) and \(H'\) are distinct connected components of \(G[S]\), then
\(\mathcal{T}_H \cap \mathcal{T}_{H'} = \emptyset\).
\end{lemma}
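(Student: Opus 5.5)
The proof is a direct argument by contradiction that uses only the definition of connected components of an induced subgraph. The Carath\'eodory independence of \(S\) and Lemma~\ref{L:basic} are not needed. Suppose \(H \neq H'\) are components of \(G[S]\) and some triangle \(T\) of \(G\) lies in \(\mathcal{T}_H \cap \mathcal{T}_{H'}\).

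By the definition of \(\mathcal{T}_H\) and \(\mathcal{T}_{H'}\), we can pick a vertex \(a \in V(T) \cap V(H)\) and a vertex \(b \in V(T) \cap V(H')\). Distinct connected components are vertex-disjoint, so \(a \neq b\).

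Any two distinct vertices of a triangle are adjacent, so \(ab \in E(G)\). Both \(a\) and \(b\) lie in \(S\), and \(G[S]\) is an induced subgraph, so \(ab\) is also an edge of \(G[S]\). Hence \(a\) and \(b\) lie in the same connected component of \(G[S]\), which gives \(H = H'\). This contradicts \(H \neq H'\), so \(\mathcal{T}_H \cap \mathcal{T}_{H'} = \emptyset\).

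No step is a real obstacle. The one point to state explicitly is that the edge \(ab\) survives in \(G[S]\), which holds precisely because the subgraph is induced.
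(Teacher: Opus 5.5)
Your proof is correct and rests on the same observation as the paper's: any two vertices of a triangle that lie in \(S\) are adjacent in the induced subgraph \(G[S]\), so all of \(V(T)\cap S\) lies in a single component. The paper first invokes Lemma~\ref{L:basic}(\ref{L:basic2}) to reduce to \(\card{V(T)\cap S}\in\{1,2\}\); as you observe, this step is unnecessary, so your argument holds for every \(S\subseteq V(G)\).
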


\begin{proof}
Let \(T\) be a triangle of \(G\) that meets \(S\).
By Lemma~\ref{L:basic}(\ref{L:basic2}), \(\card{V(T)\cap S} \in \{1,2\}\).
If \(\card{V(T)\cap S}=2\), those two vertices are adjacent and lie in the same component of \(G[S]\).
If \(\card{V(T)\cap S}=1\), the vertex lies in a unique component.
Thus \(T\) belongs to \(\mathcal{T}_H\) for exactly one component \(H\).
\end{proof}

\begin{lemma}\label{L:counting}
Let \(H\) be a connected component of \(G[S]\), and suppose every edge of \(G\) lies in a triangle.
\begin{enumerate}
\item If \(H\) has no edges, then \(\card{\mathcal{T}_H} \geq \card{V(H)}\).
\item If \(\card{E(H)} \geq \card{V(H)}\), then \(\card{\mathcal{T}_H} \geq \card{V(H)}\).
\item If \(H\) is a tree with at least one edge, then \(\card{\mathcal{T}_H} \geq \card{V(H)} - 1\).
Equality holds if and only if every edge of \(H\) is the base of exactly one triangle, and no triangle meets \(H\) in exactly one vertex.
\end{enumerate}
\end{lemma}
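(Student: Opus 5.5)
The plan is to build an injection from a suitable set of size \(\card{V(H)}\) or \(\card{V(H)}-1\) into \(\mathcal{T}_H\). The key tool is Lemma~\ref{L:basic}(\ref{L:basic2}): every triangle \(T\) of \(G\) satisfies \(\card{V(T)\cap S}\le 2\).

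First I record how triangles in \(\mathcal{T}_H\) meet \(H\). Let \(T \in \mathcal{T}_H\). If \(\card{V(T)\cap S}=2\), the two vertices of \(S\) in \(T\) are adjacent. By the argument of Lemma~\ref{L:partition} they lie in \(H\), so \(T\) contains exactly one edge of \(H\), which I call its \emph{base}. Otherwise \(\card{V(T)\cap S}=1\) and \(T\) meets \(H\) in a single vertex. This gives the decomposition
\[
\card{\mathcal{T}_H} \;=\; \sum_{e\in E(H)} \beta(e) \;+\; \mu,
\]
where \(\beta(e)\) is the number of triangles of \(G\) with base \(e\), and \(\mu\) is the number of triangles meeting \(H\) in exactly one vertex. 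Every \(e\in E(H)\) is an edge of \(G\), so by hypothesis it lies in a triangle, and that triangle has base \(e\). Hence \(\beta(e)\ge 1\) for every edge of \(H\). Choosing one such triangle per edge gives an injection \(E(H)\to\mathcal{T}_H\), which is injective because a triangle has at most one base.

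The three parts then follow.

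\emph{Part 2.} The decomposition gives \(\card{\mathcal{T}_H}\ge \card{E(H)}\ge\card{V(H)}\).

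\emph{Part 3.} If \(H\) is a tree with at least one edge, then \(\card{E(H)}=\card{V(H)}-1\), so \(\card{\mathcal{T}_H}\ge \card{V(H)}-1\). Since each \(\beta(e)\ge1\) and \(\mu\ge0\), equality in the decomposition holds exactly when every \(\beta(e)=1\) and \(\mu=0\). This is precisely the stated condition: each edge of \(H\) is the base of exactly one triangle, and no triangle meets \(H\) in exactly one vertex.

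\emph{Part 1.} Since \(H\) is connected and has no edges, \(H=\Set{v}\), and I need a triangle through \(v\). When \(\card{S}\ge 2\), this is Lemma~\ref{L:basic}(\ref{L:basic1}). When \(\card{S}=1\), I would argue as follows. If \(v\) has a neighbour, that edge lies on a triangle by hypothesis, so \(v\) is on a triangle. If \(v\) is isolated in \(G\), the inequality \(\card{\mathcal{T}_H}\ge1\) genuinely fails. This degenerate case must therefore be excluded by the standing assumptions, for instance \(G\) connected with at least one edge, as in Theorem~\ref{T:New1}; I would state that explicitly.

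The main point needing care is that the base of a triangle is well defined and belongs to \(H\), rather than to another component or being absent. This is exactly what Lemma~\ref{L:basic}(\ref{L:basic2}) provides, and it is what makes the edge-to-triangle map injective. The rest is bookkeeping.
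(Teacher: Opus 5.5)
Your proof is correct and follows the paper's argument: each edge of \(H\) lies on a triangle, distinct edges of \(H\) give distinct members of \(\mathcal{T}_H\) because Lemma~\ref{L:basic}(\ref{L:basic2}) prevents a triangle from containing two edges of \(G[S]\), and equality in the tree case forces exactly one triangle per edge and no triangle meeting \(H\) in a single vertex. Your remark on Part~1 is also accurate: the paper simply cites Lemma~\ref{L:basic}(\ref{L:basic1}), which needs \(\card{S}\geq 2\), so \(S=\Set{v}\) with \(v\) isolated in \(G\) is a genuine but harmless exception; it is ruled out in Section~\ref{S:proof-main} by the reduction to graphs in which every vertex lies on a triangle.
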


\begin{proof}
Since every edge of \(G\) lies in a triangle, each edge of \(H\) contributes at least one triangle to \(\mathcal{T}_H\).
If \(H\) has no edges, then \(H\) is an isolated vertex \(u\), and \(u\) lies in a triangle by Lemma~\ref{L:basic}(\ref{L:basic1}), giving \(\card{\mathcal{T}_H} \geq 1 = \card{V(H)}\).
If \(\card{E(H)} \geq \card{V(H)}\), then \(\card{\mathcal{T}_H} \geq \card{E(H)} \geq \card{V(H)}\).
If \(H\) is a tree with at least one edge, then \(\card{E(H)} = \card{V(H)} - 1\), so \(\card{\mathcal{T}_H} \geq \card{V(H)} - 1\), with equality if and only if no edge of \(H\) is the base of more than one triangle and no triangle meets \(H\) in exactly one vertex.
\end{proof}

A connected component \(H\) of \(G[S]\) is called \emph{exceptional} if it is a tree with at least one edge, every edge of \(H\) is the base of exactly one triangle in \(G\), and no triangle of \(G\) meets \(H\) in exactly one vertex.
Exceptional components are precisely those tree components for which \(\card{\mathcal{T}_H} = \card{V(H)} - 1\).

\section{Proof of Theorem~\ref{T:Main}}\label{S:proof-main}

It suffices to assume that \(G\) is connected. For \(t(G)\ge 1\), deleting edges that lie on no triangle does not change the \(\Tri\)-hull operation, and deleting vertices that lie on no triangle does not affect the maximum size of a Carath\'eodory independent set: by Lemma~\ref{L:basic}, no Carath\'eodory independent set of size at least two contains such a vertex, while the presence of a triangle gives a Carath\'eodory independent set of size two. Thus, in the nontrivial cases, we may assume that every edge and vertex lies on a triangle.

The proof is by induction on \(t = t(G)\).
If \(t=0\), then \(G\) is triangle-free, so \(\Hull{S} = S\) for every \(S \subseteq V(G)\), giving \(c_\triangle(G) = 1 = t+1\).

If \(t=1\), let \(\{u,v,w\}\) be the unique triangle.
By Lemma~\ref{L:basic}, any Carath\'eodory independent set of size at least two is contained in \(\{u,v,w\}\) and does not equal \(\{u,v,w\}\).
Any two-element subset of \(\{u,v,w\}\) is Carath\'eodory independent, so \(c_\triangle(G) = 2 = t+1\).

Now let \(t \geq 2\) and suppose the result holds for all graphs with fewer than \(t\) triangles.
Let \(S\) be a Carath\'eodory independent set in \(G\).
Let \(H_1,\dots,H_m\) be the connected components of \(G[S]\), and let \(\mathcal{E}\) denote the collection of exceptional components.
By Lemmas~\ref{L:partition} and~\ref{L:counting}, the families \(\mathcal{T}_{H_1},\ldots,\mathcal{T}_{H_m}\) are pairwise disjoint, and
\[
\card{S} = \sum_{i=1}^m \card{V(H_i)} \leq \sum_{i=1}^m \card{\mathcal{T}_{H_i}} + \card{\mathcal{E}} \leq t + \card{\mathcal{E}}.
\]
If \(\card{\mathcal{E}} \leq 1\), then \(\card{S} \leq t + \card{\mathcal{E}} \leq t + 1\) and we are done.
Henceforth assume \(\card{\mathcal{E}} \geq 2\).

\medskip
\noindent\textbf{Case 1: Inductive reduction.}
Suppose there exists \(H \in \mathcal{E}\) and a leaf \(u\) of \(H\) with the following property: letting \(v\) be the unique neighbour of \(u\) in \(H\) and \(x\) the unique apex of the triangle on the edge \(uv\), the edge \(xv\) does not lie on any \(4\)-cycle in \(G - u\).

Form the graph \(G'\) by deleting \(u\) and contracting the edge \(xv\) to a new vertex \(\bar{v}\), and let \(S' \defn (S \setminus \Set{u,v}) \cup \Set{\bar{v}}\).

\begin{lemma}\label{L:CI-preservation}
\(S'\) is Carath\'eodory independent in \(G'\).
\end{lemma}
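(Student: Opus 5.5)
The plan is to compare hulls in \(G\) and \(G'\) through the natural map \(\varphi \colon V(G)\setminus\Set{u} \to V(G')\). This map sends \(x\) and \(v\) to \(\bar v\) and fixes every other vertex. Two facts come first. Since \(u,v \in S\) and \(uvx\) is a triangle, Lemma~\ref{L:basic}(\ref{L:basic2}) gives \(x \notin S\). Since \(H\) is exceptional and \(u\) is a leaf of \(H\) with neighbour \(v\), every triangle through \(u\) meets \(H\) in at least two vertices, hence contains \(v\). So \(uvx\) is the \emph{only} triangle of \(G\) containing \(u\).

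\textbf{Step 1 (triangle correspondence).} First I will show that every triangle of \(G'\) is the \(\varphi\)-image of a triangle of \(G - u\). A triangle of \(G'\) avoiding \(\bar v\) is literally a triangle of \(G\). A triangle \(\bar v a b\) lifts to \(xab\) or \(vab\) unless, up to symmetry, \(a \sim x\), \(b \sim v\), \(a\not\sim v\) and \(b \not\sim x\). In that case \(x\,v\,b\,a\) is a \(4\)-cycle through \(xv\) in \(G-u\), which the Case~1 hypothesis forbids. Conversely, a triangle of \(G-u\) maps either to a triangle of \(G'\) or, if it contains both \(x\) and \(v\), to an edge.

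\textbf{Step 2 (hull transfer).} Let \(A \subseteq V(G)\setminus\Set{u}\). By induction on the interval iterates \(I_{\Tri}^n\), I will prove two inclusions:
\[
\varphi\bigl(\Hull[G]{A}\setminus\Set{u}\bigr) \subseteq \Hull[G']{\varphi(A)}
\quad\text{whenever } \Set{x,v}\cap A \neq \emptyset,
\]
and
\[
\Hull[G']{\varphi(A)} \subseteq \varphi\bigl(\Hull[G]{A \cup \Set{x,v}}\setminus\Set{u}\bigr)\ \text{ in general}.
\]
I will also need the refinement that if \(\bar v \notin \varphi(A)\), then \(\Hull[G']{\varphi(A)} \subseteq \varphi(\Hull[G]{A}\setminus\Set{u})\). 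Its proof: whenever \(\bar v\) enters the \(G'\)-hull, the lifted triangle puts \(x\) or \(v\) into the \(G\)-hull, and Step~1 lets every later \(G'\)-step be lifted. A subtle point is that a \(G'\)-step using \(\bar v\) may need the \emph{other} preimage of \(\bar v\). I will handle this by observing that, once \(x\) or \(v\) lies in the \(G\)-hull, the hull computed from \(A \cup\Set{x,v}\) differs only by this merge. I expect this bookkeeping to be the main technical obstacle.

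\textbf{Step 3 (the witness).} Let \(z \in \Hull[G]{S}\setminus \Union_{s\in S}\Hull[G]{S\setminus\Set{s}}\). Clearly \(z \notin S\). Also \(z \neq x\): because \(\card{\mathcal E}\geq 2\) we have \(\card{S}\geq 4\), so for any \(w\in S\setminus\Set{u,v}\) we get \(x \in \Itr{S\setminus\Set{w}}\). Hence \(\varphi\) is injective at \(z\) and \(\varphi(z)\neq\bar v\). Moreover \(\Hull[G]{S} = \Hull[G]{(S\setminus\Set{u})\cup\Set{x}}\cup\Set{u}\), so Step~2 gives \(\varphi(z)\in\Hull[G']{S'}\).

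Now take \(s' \in S'\). If \(s'\neq \bar v\), then \(S'\setminus\Set{s'} = \varphi(S\setminus\Set{u,s'})\), and by Step~2 its \(G'\)-hull lies in \(\varphi(\Hull[G]{(S\setminus\Set{s'})\cup\Set{x}}\setminus\Set{u}) = \varphi(\Hull[G]{S\setminus\Set{s'}}\setminus\Set{u})\). The last equality holds because \(x\in\Hull[G]{S\setminus\Set{s'}}\) via \(u,v\). Since \(z\notin\Hull[G]{S\setminus\Set{s'}}\) and \(z\ne x,v\), injectivity gives \(\varphi(z)\notin\Hull[G']{S'\setminus\Set{s'}}\). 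If \(s'=\bar v\), then \(\bar v\notin S'\setminus\Set{\bar v} = \varphi(S\setminus\Set{u,v})\). The refinement in Step~2 places the \(G'\)-hull inside \(\varphi(\Hull[G]{S\setminus\Set{u,v}})\subseteq \varphi(\Hull[G]{S\setminus\Set{v}}\setminus\Set{u})\), and again \(\varphi(z)\) is excluded. Thus \(\varphi(z)\) witnesses that \(S'\) is Carath\'eodory independent in \(G'\).
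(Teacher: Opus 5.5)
Your overall framework matches the paper's: the map $\varphi$, lifting triangles of $G'$ through the no-$4$-cycle hypothesis, and the case $s'\neq\bar v$. That case is fine, because $x,v\in\Hull[G]{S\setminus\Set{s'}}$ lets every $G'$-step lift.

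The gap is the ``refinement'' you use for $s'=\bar v$. It is false for general $A$, and your proposed repair (that once $x$ or $v$ is in the $G$-hull, $\Hull[G]{A\cup\Set{x,v}}$ differs only by the merge) does not hold. Here is a counterexample.
Take $u$ adjacent to $x$ and $v$, and let $G-u$ have edges $xv,xa,xb,ab,vc,vd,cd$.
Every edge lies on a triangle, and $xv$ lies on no $4$-cycle and on no triangle of $G-u$.
For $A=\Set{a,b,c}$ one gets $\Hull[G]{A}=\Set{a,b,c,x}$, because each of $vcd$ and $uvx$ has only one vertex present.
In $G'$, however, $\bar v ab$ produces $\bar v$, and then $\bar v cd$ produces $d$.
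This second step lifts only to $vcd$, which needs the \emph{other} preimage $v$, and nothing in $G$ supplies it.
So $d\in\Hull[G']{\varphi(A)}\setminus\varphi(\Hull[G]{A})$.

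The missing idea is to use Carath\'eodory independence of $S$ for the specific set $S\setminus\Set{v}$, which still contains $u$. With it you show that $\bar v$ never enters $\Hull[G']{S'\setminus\Set{\bar v}}$.
Before $\bar v$ appears, every $G'$-step is a $G$-step avoiding $x$ and $v$.
So the first triangle $\bar v ab$ that produces $\bar v$ lifts (by your Step~1) to $vab$ or $xab$, with $a,b\in\Hull[G]{S\setminus\Set{v}}$.
In the first case $v\in\Hull[G]{S\setminus\Set{v}}$.
In the second, $x$ lies there, and $\triangle uvx$ with $u\in S\setminus\Set{v}$ puts $v$ there.
Both contradict independence, so every step from $S'\setminus\Set{\bar v}$ lifts verbatim. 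This is exactly the paper's treatment of $p'=\bar v$.

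There is also a smaller omission. Your first inclusion, used to get $\varphi(z)\in\Hull[G']{S'}$, fails if some $w\neq u$ forms a triangle with $x$ and $v$. As your Step~1 notes, that triangle collapses to an edge in $G'$, so $w$ may never be generated there. You must exclude this case:
\begin{itemize}
\item a triangle $xvr$ with $r\notin S$ meets $H$ only in $v$, contradicting exceptionality;
\item if $r\in S$, then $r\in V(H)\setminus\Set{u}$, so $v,r$ generate $x$ and then $u$ inside $\Hull[G]{S\setminus\Set{u}}$, contradicting independence.
\end{itemize}
The paper proves this fact just after the lemma.
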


\begin{proof}
Let \(\phi \colon V(G) \setminus \{u\} \to V(G')\) send both \(x\) and \(v\) to \(\bar{v}\) and fix all other vertices.
Since $H$ is exceptional and $u$ is a leaf of $H$, $\triangle uvx$ is the only triangle containing $u$. By the hypothesis of Case~1, contracting $xv$ in $G-u$ creates no new triangle.

Choose
\[
q\in \operatorname{Hull}_G(S)\setminus\bigcup_{p\in S}\operatorname{Hull}_G(S\setminus\{p\})
\]
witnessing the Carath\'eodory independence of $S$. Since Case~1 is reached only after $|\mathcal E|\ge2$, we have $|S|\ge3$. Hence $q\notin\{u,v,x\}$: the vertices $u$ and $v$ belong to $S\setminus\{p\}$ for some $p\ne u,v$, while for any $p\in S\setminus\{u,v\}$ the vertices $u,v\in S\setminus\{p\}$ generate $x$.

The closure sequence generating $q$ from $S$ maps to a closure sequence generating $\phi(q)$ from $S'$: the only use of $u$ can be through $\triangle uvx$, and in $G'$ the roles of $v$ and $x$ are represented by the initial vertex $\bar v$.

Now fix $p'\in S'$. If $p'\ne\bar v$, put $p=p'$. Then $u,v\in S\setminus\{p\}$, so $x$ enters the first iterate of $S\setminus\{p\}$. Because the contraction creates no new triangles, every closure step in $G'$ from $S'\setminus\{p'\}$ lifts to a valid closure step in $G$ from $S\setminus\{p\}$. Since $q\notin\{x,v\}$, it follows that
\[
\phi(q)\notin \operatorname{Hull}_{G'}(S'\setminus\{p'\}).
\]

It remains to take $p'=\bar v$, corresponding to deleting $v$ in $G$. We claim that $\bar v$ never enters $\operatorname{Hull}_{G'}(S'\setminus\{\bar v\})$. Otherwise, let $\triangle \bar v ab$ be the triangle that first generates $\bar v$. Before this step no closure step uses $\bar v$, so the vertices $a,b$ lift to vertices in $\operatorname{Hull}_G(S\setminus\{v\})$. Since the contraction creates no new triangle, $\triangle \bar v ab$ lifts to either $\triangle vab$ or $\triangle xab$ in $G-u$. In the first case $v\in\operatorname{Hull}_G(S\setminus\{v\})$. In the second case $x\in\operatorname{Hull}_G(S\setminus\{v\})$, and then $u,x$ generate $v$ through $\triangle uvx$. Both contradict the Carath\'eodory independence of $S$. Thus $\bar v$ is never generated, and every closure step from $S'\setminus\{\bar v\}$ lifts directly to $G$ from $S\setminus\{v\}$. Hence again
\[
\phi(q)\notin \operatorname{Hull}_{G'}(S'\setminus\{\bar v\}).
\]

Therefore
\[
\phi(q)\in \operatorname{Hull}_{G'}(S')\setminus\bigcup_{p'\in S'}\operatorname{Hull}_{G'}(S'\setminus\{p'\}),
\]
so $S'$ is Carath\'eodory independent in $G'$.
\end{proof}
Since $u$ is a leaf of the exceptional component $H$, $\triangle uvx$ is the only triangle containing $u$. Moreover, $xv$ lies in no triangle of $G-u$. Indeed, if $\triangle xvr$ were such a triangle and $r\notin S$, it would meet $H$ in the single vertex $v$, contradicting the exceptionality of $H$. If $r\in S$, then $r\in V(H)$ and, in $\operatorname{Hull}(S\setminus\{u\})$, the vertices $v,r$ generate $x$, after which $v,x$ regenerate $u$, again a contradiction.

Finally, contracting $xv$ creates no new triangle, because any genuinely new triangle at $\bar v$ would correspond to a $4$-cycle containing the edge $xv$ in $G-u$. Nor can two distinct surviving triangles be identified by the contraction, since that would give a triangle containing $xv$ in $G-u$. Hence
\(
t(G')=t(G)-1.
\)
By the induction hypothesis, \(\card{S'} \leq t(G') + 1 = t(G)\), and therefore \(\card{S} = \card{S'} + 1 \leq t + 1\).

\medskip
\noindent\textbf{Case 2: The \(4\)-cycle configuration.}
Suppose Case~1 does not apply. Then, for every $H\in\mathcal E$
and every leaf $u$ of $H$, if $v$ is the unique neighbor of $u$
in $H$ and $x$ is the unique apex of the triangle on the edge $uv$,
the edge $xv$ lies on a $4$-cycle in $G-u$.

Fix $H\in\mathcal E$ and a leaf $u$ of $H$, and write such a
$4$-cycle as
\[
x-v-s-z-x.
\]

First suppose that $s\in S$. Since $vs\in E(G[S])$ and $v\in V(H)$,
we have $s\in V(H)$. We claim that $z\notin S$. Otherwise
$z\in V(H)$. Choose a triangle $\triangle xza$ containing the edge
$xz$. Since $H$ is exceptional and $x\notin S$, we must have
$a\in V(H)$. Moreover, $a\neq u$, since $u$ is a leaf of $H$.
Thus $z,a\in S\setminus\{u\}$ generate $x$, after which $v,x$
generate $u$, contradicting the Carath\'eodory independence of $S$.

Hence $z\notin S$. Choose a triangle $\triangle szr$ containing the
edge $sz$. Since $s\in V(H)$ and $H$ is exceptional, we have
$r\in V(H)$. Thus $z$ is the unique apex of the exceptional edge
$sr$.

Now choose a triangle
\[
T=\triangle xza
\]
containing the edge $xz$. We claim that $a\notin S$. If
$a\in V(H)$, then $T$ meets $H$ in exactly one vertex, contradicting
the exceptionality of $H$. If $a\in S\setminus V(H)$, then in
$\operatorname{Hull}(S\setminus\{u\})$ the vertices $s,r$ generate
$z$, then $z,a$ generate $x$, and finally $v,x$ generate $u$, again
a contradiction. Hence $T$ is disjoint from $S$.

We defer this configuration for the moment and consider the remaining
case $s\notin S$. Since the edge $vs$ lies on a triangle and $H$ is
exceptional, there exists $w\in V(H)$ such that $\triangle vsw$ is
a triangle. Set $y:=s$. Then $y$ is the unique apex of the
exceptional edge $vw$, and the $4$-cycle has the form
\[
x-v-y-z-x.
\]
Choose triangles
\[
A=\triangle xza
\qquad\text{and}\qquad
B=\triangle yzb
\]
containing the edges $xz$ and $yz$, respectively.

We establish the structural constraints needed for the case analysis.

\begin{lemma}\label{L:atmost-one}
At most one of \(a\), \(z\), \(b\) belongs to \(S\).
\end{lemma}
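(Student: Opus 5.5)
The plan is to show that any two of \(a,z,b\) lying in \(S\) force some vertex of \(S\) into the hull of \(S\) minus that vertex, contradicting the Carath\'eodory independence of \(S\). There are three pairs to rule out: \(\{z,a\}\), \(\{a,b\}\) and \(\{z,b\}\). Each is handled by an explicit short closure chain.

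First I would record the standing facts from the setup. The vertex \(x\) is the unique apex of the exceptional edge \(uv\), and \(y\) is the unique apex of the exceptional edge \(vw\), with \(u,v,w\in V(H)\). By Lemma~\ref{L:basic}(\ref{L:basic2}), no triangle lies inside \(S\), so \(x,y\notin S\). The triangles \(\triangle uvx\) and \(\triangle vwy\) give two closure moves: from \(\{u,v\}\) we generate \(x\), and from \(\{v,w\}\) we generate \(y\). Conversely, \(\{v,x\}\) generates \(u\), and \(\{v,y\}\) generates \(w\). Every chain below only adds to a set one vertex that forms a triangle with two vertices already in it, so it stays inside the relevant hull.

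\emph{Pair \(\{z,a\}\).} In \(\Hull{S\setminus\Set{u}}\), the vertices \(z,a\) generate \(x\) through \(A=\triangle xza\). Then \(v,x\) generate \(u\), so \(u\in\Hull{S\setminus\Set{u}}\), a contradiction.

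\emph{Pair \(\{a,b\}\).} In \(\Hull{S\setminus\Set{u}}\), the vertices \(v,w\) generate \(y\). Next \(y,b\) generate \(z\) via \(B\), then \(z,a\) generate \(x\) via \(A\), and finally \(v,x\) generate \(u\). Again \(u\in\Hull{S\setminus\Set{u}}\), a contradiction. This uses that \(u\notin\{v,w,a,b\}\), which holds because \(u\) is a leaf adjacent in \(S\) only to \(v\), and \(a,b\) are adjacent to \(x\) and \(y\) respectively.

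\emph{Pair \(\{z,b\}\).} This pair is the main obstacle. Deleting \(u\) does not obviously help, because generating \(x\) there would need \(a\). The idea is to delete \(w\) instead. In \(\Hull{S\setminus\Set{w}}\), the vertices \(z,b\) generate \(y\) through \(B\), and then \(v,y\) generate \(w\) through \(\triangle vwy\). Hence \(w\in\Hull{S\setminus\Set{w}}\), a contradiction.

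The one point to check here is that \(w\notin\{z,b,v\}\), so that these vertices really lie in \(S\setminus\Set{w}\). We have \(w\neq v\) since \(vw\) is an edge. We have \(w\neq z\) because otherwise \(z=w\) would be adjacent to \(x\), and \(\triangle xza\) would meet \(H\) in a single vertex or in \(\{w,a\}\). I would dispose of this degenerate possibility using exceptionality, since the edge \(vw\) has \(y\) as its only apex. If \(b=w\), then \(B=\triangle yzw\) with \(z\in S\), so \(zw\) is an edge of \(H\) with apex \(y\). But \(y\) is already the unique apex of \(vw\), and this shows \(y\) lies on two triangles based on edges of \(H\). I would use that configuration to derive \(x\in\Hull{S\setminus\Set{u}}\) and again reach a contradiction.

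Checking these distinctness conditions in each chain is the only delicate step. I would treat them as short side remarks, each using exceptionality of \(H\) and the choice of the \(4\)-cycle in \(G-u\).
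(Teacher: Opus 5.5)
Your proposal is correct and follows the same strategy as the paper: each pair is excluded by an explicit short closure chain. The only difference is which vertex of \(S\) is deleted. The paper deletes \(b\) for \(\{a,b\}\), and deletes \(v\) for both \(\{a,z\}\) and \(\{z,b\}\), regenerating \(v\) via \(\triangle uvx\) or \(\triangle vwy\). Deleting \(v\) in the \(\{z,b\}\) case makes your degenerate cases \(z=w\) and \(b=w\) harmless, so you would not need the roundabout detour through \(x\in\operatorname{Hull}(S\setminus\{u\})\).

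One justification should be tightened: \(u\notin\{w,a,b\}\) holds because \(\triangle uvx\) is the only triangle through \(u\) (for example, \(a=u\) would force \(z=v\)), not merely because \(a\) and \(b\) are adjacent to \(x\) and \(y\).

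Your route also gives something the paper's does not. Your \(\{a,b\}\) chain deletes \(u\) and never uses \(a\neq b\), so it shows \(a=b\in S\) is impossible. Hence the paper's Subcase~(c) is in fact vacuous.
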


\begin{proof}
We show that no two of \(a\), \(z\), \(b\) can simultaneously belong to \(S\).

Suppose \(a, b \in S\) (with \(a \neq b\)).
Since \(u, v \in S\), triangle \(\triangle uvx\) gives \(x \in \Itr{S \setminus \Set{b}}\).
Then \(x, a \in \Hull{S \setminus \Set{b}}\), so \(\triangle xza\) gives \(z \in \Hull{S \setminus \Set{b}}\).
Then \(z, y \in \Hull{S \setminus \Set{b}}\), so \(\triangle yzb\) gives \(b \in \Hull{S \setminus \Set{b}}\), a contradiction.

Suppose \(a, z \in S\).
Then \(a, z \in S \setminus \{v\}\), so \(\triangle xza\) gives \(x \in \Itr{S \setminus \Set{v}}\).
Then \(u \in S \setminus \{v\}\) and \(\triangle uvx\) give \(v \in \Hull{S \setminus \Set{v}}\), a contradiction.

Suppose \(b, z \in S\).
Then \(b, z \in S \setminus \{v\}\), so \(\triangle yzb\) gives \(y \in \Itr{S \setminus \Set{v}}\).
Then \(w \in S \setminus \{v\}\) and \(\triangle vwy\) give \(v \in \Hull{S \setminus \Set{v}}\), a contradiction.
\end{proof}

\begin{lemma}\label{L:outside-exceptional}
In the \(4\)-cycle configuration:
\begin{enumerate}
\item\label{L:outside-exceptional1} \(x, y \notin S\);
\item\label{L:outside-exceptional2} no element of \(\{a, z, b\} \cap S\) is a vertex of any component in \(\mathcal{E}\).
\end{enumerate}
\end{lemma}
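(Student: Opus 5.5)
Part (\ref{L:outside-exceptional1}) should follow directly from the definitions. The vertex \(x\) is the apex of the triangle \(\triangle uvx\) on the exceptional edge \(uv\). If \(x\in S\), then \(\{u,v,x\}\subseteq S\), and Lemma~\ref{L:basic}(\ref{L:basic2}) rules this out. Alternatively, \(x\) would then be adjacent to \(u\) in \(G[S]\), so \(x \in V(H)\), and \(u\) would not be a leaf. The same argument applies to \(y\), which is the apex of \(\triangle vwy\) with \(v,w\in S\).

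For part (\ref{L:outside-exceptional2}), Lemma~\ref{L:atmost-one} says at most one of \(a,z,b\) lies in \(S\). Call this vertex \(c\), and suppose \(c\in V(H')\) for some \(H'\in\mathcal{E}\). I would examine each possibility for \(c\) in turn. In every case the plan is to find a triangle that meets \(H'\) in exactly one vertex, which contradicts exceptionality. If no such triangle exists, I would instead derive a hull-generation contradiction of the kind used in Lemma~\ref{L:atmost-one}.

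If \(c=a\), then \(x,z\notin S\) by part (\ref{L:outside-exceptional1}) and Lemma~\ref{L:atmost-one}, so \(A=\triangle xza\) meets \(H'\) only in \(a\). This is impossible for an exceptional component. If \(c=b\), then \(y,z\notin S\), so \(B=\triangle yzb\) meets \(H'\) only in \(b\), again a contradiction. If \(c=z\), both \(A\) and \(B\) contain \(z\), while \(x,y\notin S\) by part (\ref{L:outside-exceptional1}) and \(a,b\notin S\) by Lemma~\ref{L:atmost-one}. So \(A\) meets \(H'\) in exactly the vertex \(z\), and we get the same contradiction.

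The main obstacle is checking that in each case the other two vertices of the triangle really lie outside \(S\), and not merely outside \(V(H')\). Exceptionality forbids any triangle that meets \(H'\) in exactly one vertex, no matter where its other vertices lie. So it is enough that the other two vertices are not in \(V(H')\), and this is what part (\ref{L:outside-exceptional1}) and Lemma~\ref{L:atmost-one} provide. I also need the degenerate coincidences \(a=b\), \(a=y\), \(b=x\) not to break the argument. Here \(a=y\) or \(b=x\) would put \(x\) or \(y\) into \(\{a,b\}\), which is compatible with part (\ref{L:outside-exceptional1}) and harmless. If \(a=b\) is in \(S\), the triangle \(A\) still meets \(S\) in a single vertex. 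With these checks done, part (\ref{L:outside-exceptional2}) follows.
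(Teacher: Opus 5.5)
Your proposal is correct and follows essentially the same route as the paper. For (1) you invoke Lemma~\ref{L:basic}(\ref{L:basic2}) on $\triangle uvx$ and $\triangle vwy$. For (2) you combine (1) with Lemma~\ref{L:atmost-one} to show that the relevant triangle ($A$ when the vertex is $a$ or $z$, and $B$ when it is $b$) meets $S$, hence $H'$, in exactly one vertex, which contradicts exceptionality. Your extra checks of the coincidences $a=b$, $a=y$, $b=x$ are harmless additions.
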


\begin{proof}
For~(1): since \(u, v \in S\) and \(\{u,v,x\}\) is a triangle, Lemma~\ref{L:basic}(\ref{L:basic2}) gives \(x \notin S\).
The same argument with \(v, w \in S\) and \(\triangle vwy\) gives \(y \notin S\).

For~(2): let \(q \in \{a,z,b\} \cap S\).
By Lemma~\ref{L:outside-exceptional}(\ref{L:outside-exceptional1}) and Lemma~\ref{L:atmost-one}, the two vertices of \(\triangle xza\) or \(\triangle yzb\) other than \(q\) all lie outside \(S\): if \(q = a\) then \(x, z \notin S\); if \(q = z\) then \(x, a \notin S\) and \(y, b \notin S\); if \(q = b\) then \(y, z \notin S\).
In each case the relevant triangle meets \(S\) in exactly the single vertex \(\{q\}\).
If \(q\) belonged to some exceptional component \(H' \in \mathcal{E}\), this triangle would meet \(H'\) in exactly one vertex, contradicting the exceptionality of \(H'\).
\end{proof}

\begin{lemma}\label{L:unique-apex}
Each vertex of \(G\) is the apex of at most one exceptional edge across all components in \(\mathcal{E}\).
\end{lemma}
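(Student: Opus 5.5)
We have to show that no vertex \(z\) can be the apex of two distinct exceptional edges \(e_1 = s_1r_1\) in \(H_1 \in \mathcal{E}\) and \(e_2 = s_2r_2\) in \(H_2 \in \mathcal{E}\), where possibly \(H_1 = H_2\). The plan is to argue by contradiction and split according to whether the two edges share a vertex.

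First, suppose the edges share a vertex, say \(s_1 = s_2 =: s\). Then \(s\) lies in both components, so \(H_1 = H_2 =: H\). Since \(e_1 \neq e_2\), we have \(r_1 \neq r_2\). Now \(z\) is adjacent to \(r_1, s, r_2\), all of which lie in \(S\). In particular, \(z \in \Itr{S \setminus \{p\}}\) for every \(p\), because at least one of \(e_1, e_2\) survives the removal of \(p\). Rather than pursue this directly, I will use exceptionality. The triangle \(\triangle z r_1 r_2\) exists as soon as \(r_1r_2\) is an edge, but that would create a cycle \(s r_1 r_2\) in the tree \(H\), so \(r_1r_2\) is not an edge. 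Instead I look at the triangle containing the edge \(zr_1\). Every edge of \(G\) lies in a triangle, and \(z \notin S\) by Lemma~\ref{L:basic}(\ref{L:basic2}). So \(zr_1\) lies in some triangle \(\triangle z r_1 c\).

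If \(c \notin S\), this triangle meets \(H\) only in \(r_1\), which contradicts exceptionality. Hence \(c \in S\), so \(c \in V(H)\) and \(cr_1\) is an edge of \(H\) with apex \(z\). Thus \(c = s\), since each edge of \(H\) has exactly one apex. This is consistent, so this case needs a Carathéodory-type argument instead. Remove \(s\): the vertices \(r_1, r_2 \in S \setminus \{s\}\) together with \(z\) must regenerate \(s\). To get there, I first need \(z \in \Hull{S\setminus\{s\}}\). I will obtain it by using a leaf-free triangle structure, namely the triangle on \(zr_2\), which must be \(\triangle z r_2 s\) by the same reasoning, and this does not help. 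So I plan to argue differently: since \(H\) is a tree in which both \(sr_1\) and \(sr_2\) are bases of triangles through the same \(z\), the edge \(r_1 z\) belongs to two triangles, \(\triangle r_1 s z\) and whatever else. Then I count: the set \(\mathcal{T}_H\) contains \(\triangle zsr_1\) and \(\triangle zsr_2\), which are distinct triangles, so this causes no contradiction with exceptionality by itself.

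The main obstacle is therefore exactly this same-vertex case. What I would try first is a direct closure argument. The set \(S \setminus \{s\}\) contains \(r_1\) and \(r_2\), and its hull should contain \(z\) through some path, using the fact that \(|S| \ge 3\) and that the witness \(q\) must avoid \(z\). If this fails, I would fall back on showing that the case cannot arise in the configurations where the lemma is applied, namely the \(4\)-cycle configuration.

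Second, suppose the edges are vertex-disjoint. Then \(z\) is adjacent to \(s_1, r_1, s_2, r_2\). Consider the edge \(zs_1\) and the triangle \(\triangle z s_1 s_2\) if \(s_1s_2 \in E\). More robustly, remove \(s_2\): \(z \in \Itr{S\setminus\{s_2\}}\) via \(e_1\), and then \(z, r_2\) generate \(s_2\) through \(\triangle z s_2 r_2\). Hence \(\Hull{S} = \Hull{S \setminus \{s_2\}}\), which contradicts the Carathéodory independence of \(S\). The same argument works when the edges share a vertex: removing the non-shared endpoint \(r_2\), the vertices \(s, r_1\) give \(z\), and then \(z, s\) regenerate \(r_2\). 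This also resolves the obstacle from the first case. In every case the removed vertex lies in the hull of the rest, so the hull does not shrink, which is the desired contradiction.
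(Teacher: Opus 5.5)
Your final paragraph is correct and is the paper's argument: delete an endpoint of one exceptional edge that is not an endpoint of the other, let the intact edge generate the common apex, and let the apex together with the surviving endpoint regenerate the deleted vertex, so the hull does not shrink. The paper does this in one step without separating the shared-vertex and vertex-disjoint cases, so the detours in your first case can be dropped; in particular, your aside that one of \(e_1,e_2\) survives every deletion is false for \(p=s\), although you never use it.
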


\begin{proof}
Suppose that a vertex \(x\) is the apex of two distinct exceptional edges \(uv\) and \(u'v'\), possibly belonging to the same exceptional component.
Since the two edges are distinct, one endpoint of the first edge, say \(u\), is not an endpoint of the second.
In \(\Hull(S\setminus\{u\})\), both endpoints \(u',v'\) are present, so they generate \(x\).
The other endpoint \(v\) of \(uv\) is also present, and hence \(v,x\) regenerate \(u\), contradicting the Carath\'eodory independence of \(S\).
Thus each vertex is the apex of at most one exceptional edge.
\end{proof}

\begin{lemma}\label{L:atmost-two}
A triangle \(T\) of \(G\) can serve as a spare triangle for at most two components in \(\mathcal{E}\).
\end{lemma}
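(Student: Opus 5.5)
The plan is to argue by contradiction, using the generation-chain technique from the proofs of Lemmas~\ref{L:atmost-one} and~\ref{L:unique-apex}.

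First I fix the meaning of the term. From the $4$-cycle configuration, I take a \emph{spare triangle} for a component $H\in\mathcal E$ to be a triangle $T$ of $G$ that contains the apex $x$ of an exceptional edge $uv$ of $H$, with $u$ a leaf of $H$, and that does not belong to $\mathcal{T}_{H'}$ for any $H'\in\mathcal E$. The triangles $A=\triangle xza$ and $\triangle xza$ disjoint from $S$ constructed above are of this kind. By Lemma~\ref{L:outside-exceptional}, such a $T$ meets no exceptional component. If the paper's definition turns out to involve the whole $4$-cycle $x-v-y-z-x$ rather than a single apex, the same argument should go through with $x$ or $y$ playing the role of the apex.

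The key step assigns each component to a vertex of $T$. Suppose $T=\{t_1,t_2,t_3\}$ is spare for three distinct components $H_1,H_2,H_3\in\mathcal E$. For each $i$, some vertex of $T$ is the apex of an exceptional edge $u_iv_i$ of $H_i$, where $u_i$ is a leaf of $H_i$. By Lemma~\ref{L:unique-apex}, a vertex is the apex of at most one exceptional edge across all components in $\mathcal E$. Distinct components therefore have distinct apices, and after relabelling $t_i$ is the apex of $u_iv_i$ for $i=1,2,3$.

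Now I derive a contradiction by looking at $\Hull{S\setminus\{u_1\}}$. The edges $u_2v_2$ and $u_3v_3$ lie in components other than $H_1$, so all four of their endpoints are in $S\setminus\{u_1\}$. Through their triangles these endpoints generate $t_2$ and $t_3$. Then $t_2,t_3$ generate $t_1$ through $T$. Finally $v_1\in S\setminus\{u_1\}$ and $t_1$ generate $u_1$ through $\triangle u_1v_1t_1$. This gives $u_1\in\Hull{S\setminus\{u_1\}}$, so $\Hull{S}=\Hull{S\setminus\{u_1\}}$, contradicting the Carath\'eodory independence of $S$. Hence $T$ is spare for at most two components.

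I expect the main obstacle to be matching the definition of spare triangle exactly. In particular I must make sure that being spare for $H$ forces some vertex of $T$ to be the apex of a \emph{leaf} edge of $H$, or at least of some exceptional edge of $H$, since the argument above needs exactly that. If instead the definition only says that $T$ contains the vertex $z$ of the $4$-cycle, I would first show that $z$ and the other vertices of $T$ inherit the same generation property. That is, from $S\setminus\{u\}$ one generates $x$ (and $y$), and hence $z$ through the $4$-cycle triangles. I would then rerun the chain $t_2,t_3\Rightarrow t_1\Rightarrow\cdots\Rightarrow u_1$ along the path $z\to x\to u_1$, as in the proof of Lemma~\ref{L:atmost-one}.
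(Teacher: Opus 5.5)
Your proposal is correct and takes the same approach as the paper. Lemma~\ref{L:unique-apex} forces three served components to use the three distinct vertices of \(T\) as apices, and then in \(\Hull{S\setminus\{u_1\}}\) the chain \(t_2,t_3 \Rightarrow t_1 \Rightarrow u_1\) contradicts Carath\'eodory independence. The leaf condition in your definition of a spare triangle is never used, so your argument also covers the paper's spare triangles of the form \(\triangle yzb\), where \(y\) is the apex of the exceptional edge \(vw\) and \(w\) need not be a leaf.
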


\begin{proof}
For \(T\) to serve as a spare for \(H \in \mathcal{E}\), it arises as either \(\triangle xza\) (with \(x\) the apex of exceptional edge \(uv\) in \(H\)) or \(\triangle yzb\) (with \(y\) the apex of exceptional edge \(vw\) in \(H\)).
In either case, exactly one vertex of \(T\) is the apex of an exceptional edge of \(H\).
By Lemma~\ref{L:unique-apex}, each vertex is the apex of at most one exceptional edge, so \(T\) can serve at most three components in \(\mathcal{E}\) a priori.

We show that all three vertices of \(T\) cannot simultaneously be apices of exceptional edges.
Suppose the vertex set of \(T\) is \(\{x_1, x_2, x_3\}\), where each \(x_i\) is the apex of exceptional edge \(u_i v_i\) in \(H_i \in \mathcal{E}\).
In \(\Hull{S \setminus \Set{u_1}}\), the vertices \(u_2, v_2\) and \(u_3, v_3\) all lie in \(S \setminus \{u_1\}\), so \(x_2\) and \(x_3\) both enter the first iterate.
Since \(T\) is a triangle and \(x_2, x_3\) are in the hull, \(x_1\) enters the second iterate.
Then \(v_1 \in S \setminus \{u_1\}\) and \(\triangle u_1 v_1 x_1\) give \(u_1 \in \Hull{S \setminus \Set{u_1}}\), contradicting that \(S\) is Carath\'eodory independent.
Hence at most two vertices of \(T\) are apices of exceptional edges, and \(T\) can serve at most two components in \(\mathcal{E}\).
\end{proof}

We first dispose of the configuration above in which $s\in S$.
Recall that in this case we obtained a triangle
\[
T=\triangle xza
\]
disjoint from $S$, where $x$ and $z$ are the unique apices of
exceptional edges $uv$ and $sr$, respectively, in the same component
$H$.

We claim that $T$ cannot serve as a spare triangle for any other
exceptional component. By Lemma~7, neither $x$ nor $z$ can be the
apex of an exceptional edge in another component. Suppose that $a$
is the apex of an exceptional edge $pq$ in some $H'\neq H$. In
$\operatorname{Hull}(S\setminus\{p\})$, the vertices $u,v$ generate
$x$, while $s,r$ generate $z$. Hence $x,z$ generate $a$, and then
$q,a$ generate $p$, contradicting the Carath\'eodory independence
of $S$. Thus $T$ is available exclusively for $H$, and we assign it
to $H$.

We now return to the case $s\notin S$, with $A=\triangle xza$ and $B=\triangle yzb$ as defined above.

\medskip
\noindent\textit{Subcase \textup{(a)}: \(b \in S\) and \(a \neq b\).}
By Lemma~\ref{L:atmost-one}, we have \(z \notin S\) and \(a \notin S\).
Hence no vertex of \(\triangle xza\) belongs to \(S\), so \(\triangle xza \notin \mathcal{T}_{H_i}\) for any component \(H_i\).
We show that \(\triangle xza\) cannot serve as a spare for any other \(H' \in \mathcal{E}\), and is thus fully available for \(H\).

For \(\triangle xza\) to serve \(H'\), one of \(x, z, a\) must be the apex of an exceptional edge in \(H'\).
By Lemma~\ref{L:unique-apex}, it cannot be \(x\).
By Lemma~\ref{L:outside-exceptional}(\ref{L:outside-exceptional2}), since \(b \in S\), the vertex \(b\) does not belong to any component in \(\mathcal{E}\).
Therefore \(b \neq u'\) for any leaf \(u'\) of any \(H' \in \mathcal{E}\), and so \(b \in S \setminus \{u'\}\) for any such leaf.

If \(z\) is the apex of exceptional edge \(u'v'\) in \(H'\), consider \(\Hull{S \setminus \Set{u'}}\).
Since \(v, w \in S \setminus \{u'\}\), triangle \(\triangle vwy\) gives \(y\) in the first iterate.
Since \(y\) is in the first iterate and \(b \in S \setminus \{u'\}\), triangle \(\triangle yzb\) gives \(z\) in the second iterate.
Then \(v' \in S \setminus \{u'\}\) and \(\triangle u'v'z\) give \(u' \in \Hull{S \setminus \Set{u'}}\), a contradiction.

If \(a\) is the apex of exceptional edge \(u'v'\) in \(H'\), consider \(\Hull{S \setminus \Set{u'}}\).
Since \(u, v \in S \setminus \{u'\}\), triangle \(\triangle uvx\) gives \(x\) in the first iterate.
Since \(v, w \in S \setminus \{u'\}\), triangle \(\triangle vwy\) gives \(y\) in the first iterate.
Since \(y\) and \(b \in S \setminus \{u'\}\) are in the hull, \(\triangle yzb\) gives \(z\) in the second iterate.
Then \(z\) and \(x\) are in the hull, so \(\triangle xza\) gives \(a\) in the third iterate.
Then \(v' \in S \setminus \{u'\}\) and \(\triangle u'v'a\) give \(u' \in \Hull{S \setminus \Set{u'}}\), a contradiction.

Hence \(\triangle xza\) is fully available and is assigned to \(H\).

\medskip
\noindent\textit{Subcase \textup{(b)}: \(a \in S\) and \(a \neq b\).}
By the same argument with the roles of \(a, b\) and \(x, y\) interchanged, the triangle \(\triangle yzb\) is fully available and is assigned to \(H\).

\medskip
\noindent\textit{Subcase \textup{(c)}: \(a = b = q \in S\).}
By Lemma~\ref{L:atmost-one}, $z\notin S$. Let $C$ be the component of $G[S]$ containing $q$. By Lemma~\ref{L:outside-exceptional}(\ref{L:outside-exceptional2}), $C$ is non-exceptional. Both $A=\triangle xzq$ and $B=\triangle yzq$ belong to $\mathcal T_C$, and they are distinct because $x\ne y$.

Neither $A$ nor $B$ can serve a different exceptional component. For example, $x$ is excluded by Lemma~\ref{L:unique-apex} and $q\in S$ cannot be the apex of an exceptional edge. If $z$ were the apex of an exceptional edge $pr$ in a component $H'\ne H$, then in $\operatorname{Hull}(S\setminus\{p\})$ the vertices $u,v$ generate $x$, the vertices $v,w$ generate $y$, the vertices $y,q$ generate $z$ through $B$, and then $r,z$ regenerate $p$, a contradiction. The same argument applies to $B$.
Thus these two triangles contribute two additional members of \(\mathcal T_C\). We associate one of these with \(H\); the counting argument below shows that this can be done for all exceptional components simultaneously.

\medskip
\noindent\textit{Subcase \textup{(d)}: none of \(a\), \(b\), \(z\) belongs to \(S\). Then both $A$ and $B$ are disjoint from $S$.}

If $A=B$, then necessarily $A=B=\triangle xyz$. This common triangle cannot serve as a spare triangle for another exceptional component. Indeed, $x$ and $y$ are already apices of exceptional edges of $H$. If $z$ were the apex of an exceptional edge $pr$ in a component $H'\ne H$, then in $\Hull{S\setminus\{p\}}$ the edges $uv$ and $vw$ generate $x$ and $y$, the triangle $\triangle xyz$ generates $z$, and then $r,z$ regenerate $p$, a contradiction. Hence the common triangle is assigned weight $1$ to $H$.

If $A\ne B$, then by Lemma~\ref{L:atmost-two} each of $A$ and $B$ can serve at most two exceptional components. Assign weight $1/2$ from each to $H$. Thus $H$ receives total weight $1$, and no triangle used in this manner receives total weight greater than $1$.

\medskip
\noindent\textit{Subcase \textup{(e)}: \(z \in S\).}
By Lemma~\ref{L:atmost-one}, $a,b\notin S$. Let $C$ be the component of $G[S]$ containing $z$. By Lemma~\ref{L:outside-exceptional}(\ref{L:outside-exceptional2}), $C$ is non-exceptional, and both $A$ and $B$ belong to $\mathcal T_C$.

First note that $A\ne B$. Otherwise $A=B=\triangle xyz$; but then in $\Hull{S\setminus\{z\}}$ the vertices $u,v$ generate $x$ and the vertices $v,w$ generate $y$, after which $x,y$ regenerate $z$, contradicting the Carath\'eodory independence of $S$.

Neither $A$ nor $B$ can serve as a spare triangle for another exceptional component. For $A$, the vertex $x$ is excluded by Lemma~\ref{L:unique-apex}, and $z\in S$ cannot be the apex of an exceptional edge. If $a$ were the apex of an exceptional edge $pr$ in a component $H'\ne H$, then in $\operatorname{Hull}(S\setminus\{p\})$ the vertices $u,v$ generate $x$, while $z$ is already present; hence $x,z$ generate $a$, and then $r,a$ regenerate $p$, a contradiction. The argument for $B$ is symmetric.

If $C$ is an isolated vertex, then $|\mathcal T_C|\ge2=|V(C)|+1$. If $C$ contains a cycle, the edge-triangles of $C$ already give at least $|V(C)|$ triangles, and $A,B$ are additional. If $C$ is a non-exceptional tree, its edge-triangles give at least $|V(C)|-1$ triangles, and the additional triangles $A,B$ give $|\mathcal T_C|\ge|V(C)|+1$. Thus in every case \(C\) contains enough additional triangles to assign one to \(H\). We make this assignment here; the counting argument below shows that these assignments can be made simultaneously when the same component \(C\) is associated with several exceptional components.

\medskip
For completeness, suppose the same non-exceptional component $C$ is used by $m$ exceptional components through either the subcase $a=b\in S$ or the subcase $z\in S$. Each such assignment contributes two distinct triangles of $\mathcal T_C$ meeting $C$ in exactly one vertex. The exclusivity arguments above imply that these $2m$ triangles are distinct across the $m$ exceptional components.

If $C$ is an isolated vertex, then
\[
|\mathcal T_C|-|V(C)|\ge 2m-1\ge m.
\]
If $C$ is a tree, its edge-triangles contribute at least $|V(C)|-1$ triangles, and hence
\[
|\mathcal T_C|-|V(C)|\ge 2m-1\ge m.
\]
If $C$ contains a cycle, its edge-triangles already contribute at least $|V(C)|$ triangles, so
\[
|\mathcal T_C|-|V(C)|\ge 2m\ge m.
\]
Thus $C$ has enough surplus to supply one full unit to each of the $m$ exceptional components assigned to it.

\medskip
Let $\mathcal F$ be the set of triangles of $G$ disjoint from $S$, and for each non-exceptional component $C$ of $G[S]$ set
\[
\delta(C):=|\mathcal T_C|-|V(C)|\ge0.
\]
The preceding assignments give one unit to every exceptional component, while no triangle of $\mathcal F$ is assigned total weight greater than $1$, and no non-exceptional component contributes more than its surplus $\delta(C)$. Hence
\[
|\mathcal E|\le |\mathcal F|+\sum_{C\notin\mathcal E}\delta(C).
\]
Since the families $\mathcal T_C$ are pairwise disjoint and every triangle either belongs to exactly one of them or lies in $\mathcal F$, we obtain
\[
\begin{aligned}
|S|
&=\sum_{H\in\mathcal E}\bigl(|\mathcal T_H|+1\bigr)
 +\sum_{C\notin\mathcal E}|V(C)|\\
&=\sum_C|\mathcal T_C|+|\mathcal E|
 -\sum_{C\notin\mathcal E}\delta(C)\\
&\le \sum_C|\mathcal T_C|+|\mathcal F|\\
&=t(G).
\end{aligned}
\]
Hence $|S|\le t(G)\le t(G)+1$, completing Case~2 and the induction.

\section{Concluding remarks}

We have proved Theorem~\ref{T:Main} by a component-counting argument and carefully tracking the triangles that are attached to each component.
By applying Theorem~\ref{T:Sierksma}, this also completed the proof of Corollary~\ref{C:Main}.
In the appendices, we give extremal characterizations of Theorem~\ref{T:Main} and Corollary~\ref{C:Main}, and apply the results to block graphs and \(k\)-trees, to prove Theorems~\ref{T:New1}--\ref{T:k-tree}.

We remark that though our proof is hands-on, and proves the inequalities by counting triangles in \(G\) and associating them with the vertices in a largest Carath\'eodory independent set, the mapping is case-dependent and requires careful bookkeeping at several places.
The underlying structure is clearest in the extremal case, but we are unable to reduce the complexity of the proof in the general case.
We believe a more direct proof of Theorem~\ref{T:Main} could shed further light on the structure of \(\Tri\)-convexity.

\begin{credits}
\subsubsection{\ackname} The authors thank Narayanan N.\ for suggesting the computation of the exchange number of \(k\)-trees at the Annual Meru Combinatorics Conference 2026, and Manoj Changat for helpful discussions at ADMA-ICDM 2026.

\subsubsection{\discintname}
The authors have no competing interests to declare that are relevant to the content of this article.
\end{credits}

\bibliographystyle{splncs04}
\bibliography{references}

@article{AnandAnilEtAl2020,
  author  = {Anand, B. S. and Anil, A. and Changat, M. and Dourado, M. C. and Ramla, S. S.},
  title   = {Computing the hull number in {$\Delta$}-convexity},
  journal = {Theoretical Computer Science},
  volume  = {844},
  year    = {2020},
  pages   = {217--226}
}

@inproceedings{AnandAnilEtAl2025a,
  author    = {Anand, B. S. and Anil, A. and Changat, M. and Nair, R. S. and Narasimha-Shenoi, P. G.},
  title     = {Helly number, Radon number and rank in {$\Delta$}-convexity on graphs},
  booktitle = {Algorithms and Discrete Applied Mathematics. 11th International Conference, CALDAM 2025, Coimbatore, India, February 13--15, 2025, Proceedings},
  editor    = {Gaur, D. and others},
  series    = {Lecture Notes in Computer Science},
  volume    = {15536},
  publisher = {Springer},
  address   = {Cham},
  year      = {2025},
  pages     = {292--306}
}

@article{AnandAnilEtAl2025b,
  author  = {Anand, B. S. and Anil, A. and Changat, M. and Narasimha-Shenoi, P. G. and Ramla, S. S.},
  title   = {Carath{\'e}odory number and exchange number in {$\Delta$}-convexity},
  journal = {Journal of Combinatorial Mathematics and Combinatorial Computing},
  volume  = {126},
  year    = {2025},
  pages   = {11--27}
}

@article{AnandDouradoEtAl2022,
  author  = {Anand, B. S. and Dourado, M. C. and Narasimha-Shenoi, P. G. and Ramla, S. S.},
  title   = {On the {$\Delta$}-interval and the {$\Delta$}-convexity numbers of graphs and graph products},
  journal = {Discrete Applied Mathematics},
  volume  = {319},
  year    = {2022},
  pages   = {487--498}
}

@book{AraujoDouradoEtAl2025,
  author    = {Ara{\'u}jo, J. and Dourado, M. C. and Protti, F. and Sampaio, R. M.},
  title     = {Introduction to Graph Convexity: An Algorithmic Approach},
  note      = {Translated from the Portuguese},
  series    = {Latin American Mathematics Series. SBMAC Collection on Applied and Computational Mathematics},
  publisher = {Springer},
  address   = {Cham},
  year      = {2025}
}

@article{Caratheodory1911,
  author  = {Carath{\'e}odory, C.},
  title   = {{\"U}ber den Variabilit{\"a}tsbereich der Fourier'schen Konstanten von positiven harmonischen Funktionen},
  journal = {Rendiconti del Circolo Matematico di Palermo},
  volume  = {32},
  number  = {1},
  year    = {1911},
  pages   = {193--217}
}

@inproceedings{CentenoDouradoEtAl2009,
  author    = {Centeno, C. C. and Dourado, M. C. and Szwarcfiter, J. L.},
  title     = {On the convexity of paths of length two in undirected graphs},
  booktitle = {DIMAP Workshop on Algorithmic Graph Theory},
  editor    = {Koster, A. and others},
  series    = {Electronic Notes in Discrete Mathematics},
  volume    = {32},
  year      = {2009},
  pages     = {11--18}
}

@article{ChangatMathew1999,
  author  = {Changat, M. and Mathew, J.},
  title   = {On triangle path convexity in graphs},
  journal = {Discrete Mathematics},
  volume  = {206},
  year    = {1999},
  pages   = {91--95}
}

@article{ChangatMulderEtAl2005,
  author  = {Changat, M. and Mulder, H. M. and Sierksma, G.},
  title   = {Convexities related to path properties on graphs},
  journal = {Discrete Mathematics},
  volume  = {290},
  year    = {2005},
  pages   = {117--131}
}

@article{Chen2009,
  author  = {Chen, N.},
  title   = {On the approximability of influence in social networks},
  journal = {SIAM Journal on Discrete Mathematics},
  volume  = {23},
  number  = {3},
  year    = {2009},
  pages   = {1400--1415}
}

@article{ErdosFriedEtAl1972,
  author  = {Erd{\H{o}}s, P. and Fried, E. and Hajnal, A. and Milner, E. C.},
  title   = {Some remarks on simple tournaments},
  journal = {Algebra Universalis},
  volume  = {2},
  year    = {1972},
  pages   = {238--245}
}

@incollection{Harary1984,
  author    = {Harary, F.},
  title     = {Convexity in graphs: Achievement and avoidance games},
  booktitle = {Convexity and Graph Theory},
  editor    = {Rosenfeld, M. and Zaks, J.},
  series    = {North-Holland Mathematics Studies},
  volume    = {87},
  publisher = {North-Holland},
  address   = {Amsterdam},
  year      = {1984},
  pages     = {323}
}

@article{HararyNieminen1981,
  author  = {Harary, F. and Nieminen, J.},
  title   = {Convexity in graphs},
  journal = {Journal of Differential Geometry},
  volume  = {16},
  number  = {2},
  year    = {1981},
  pages   = {185--190}
}

@article{Helly1923,
  author  = {Helly, E.},
  title   = {{\"U}ber Mengen konvexer K{\"o}rper mit gemeinschaftlichen Punkten},
  journal = {Jahresbericht der Deutschen Mathematiker-Vereinigung},
  volume  = {32},
  year    = {1923},
  pages   = {175--176}
}

@article{Jamison1981,
  author  = {Jamison, R. E.},
  title   = {Partition numbers for trees and ordered sets},
  journal = {Pacific Journal of Mathematics},
  volume  = {96},
  number  = {1},
  year    = {1981},
  pages   = {115--140}
}

@incollection{Jamison1982,
  author    = {Jamison, R. E.},
  title     = {A perspective on abstract convexity: Classifying alignments by varieties},
  booktitle = {Convexity and Related Combinatorial Geometry},
  editor    = {Kay, D. C. and Breen, M.},
  publisher = {Marcel Dekker},
  address   = {New York},
  year      = {1982}
}

@article{Levi1951,
  author  = {Levi, F. W.},
  title   = {On Helly's theorem and the axioms of convexity},
  journal = {Journal of the Indian Mathematical Society},
  volume  = {15},
  year    = {1951},
  pages   = {65--76}
}

@incollection{Mulder2008,
  author    = {Mulder, H. M.},
  title     = {Transit functions on graphs (and posets)},
  booktitle = {Convexity in Discrete Structures},
  editor    = {Changat, M. and others},
  series    = {Ramanujan Mathematical Society Lecture Notes Series},
  volume    = {5},
  publisher = {Ramanujan Mathematical Society},
  address   = {Mysore},
  year      = {2008},
  pages     = {117--130}
}

@article{Radon1921,
  author  = {Radon, J.},
  title   = {Mengen konvexer K{\"o}rper, die einen gemeinsamen Punkt enthalten},
  journal = {Mathematische Annalen},
  volume  = {83},
  number  = {1--2},
  year    = {1921},
  pages   = {113--115}
}

@incollection{Sierksma1984,
  author    = {Sierksma, G.},
  title     = {Exchange properties of convexity spaces},
  booktitle = {Convexity and Graph Theory, Proc. Conf., Israel, 1981},
  series    = {Annals of Discrete Mathematics},
  volume    = {20},
  year      = {1984},
  pages     = {293--305}
}

@book{vanDeVel1993,
  author    = {van de Vel, M. L.},
  title     = {Theory of Convex Structures},
  publisher = {North-Holland},
  address   = {Amsterdam},
  year      = {1993}
}

\appendix

\counterwithin{theorem}{section}
\counterwithin{lemma}{section}
\counterwithin{proposition}{section}
\counterwithin{corollary}{section}
\counterwithin{figure}{section}
\counterwithin{table}{section}

\section{Characterization of extremal graphs for Theorem~\ref{T:Main} and Corollary~\ref{C:Main}}\label{A:new12}

\begin{proof}[Proof of Theorem~\ref{T:New1}]\hfill\newline
\textbf{(\(\Leftarrow\))}
Let $S$ be the leaf set of the full binary tree $T$, and let $r$ be its root. Fix a leaf $a\in S$ and consider the unique $a$--$r$ path $P$ in $T$. We claim that no vertex of $P$ belongs to $\Hull{S\setminus\{a\}}$. Otherwise let $q$ be the first vertex of $P$ to enter during the iterative hull process. Every triangle of $G$ containing $q$ is either the triangle formed by $q$ and its two children, or (when $q\ne r$) the triangle formed by $q$, its parent, and its sibling. In either case, for that triangle to generate $q$, another vertex of $P$ must already have entered the hull, contradicting the choice of $q$. Thus $r\notin\Hull{S\setminus\{a\}}$ for every $a\in S$. On the other hand, $r\in\Hull{S}$ by firing the triangles bottom-up. Hence $S$ is Carath\'eodory independent.

\medskip
\noindent\textbf{(\(\Rightarrow\))}
Induction on \(t = t(G)\).

We prove the slightly stronger statement that, whenever $S$ is a Carath\'eodory independent set with $|S|=t(G)+1$, the full binary tree $T$ may be chosen so that its leaf set is exactly $S$.

If \(t = 0\) then \(\card{S} = 1\); take \(\mathcal{T}\) to be the single-leaf tree. Conditions (i) and (ii) hold vacuously.

If $t=1$, then, since $G$ is connected and every edge lies on a triangle,
$G=K_3$. Since $|S|=2$, take the two vertices of $S$ as the leaves and the
remaining vertex as the root. The sibling edge gives the third edge of $K_3$,
so the strengthened statement holds.

Suppose \(t \geq 2\).
Since \(\card{S} = t+1\), we have \(\card{\mathcal{E}} \geq 1\) (as \(\card{\mathcal{E}} = 0\) gives \(\card{S} \leq t\)).
We show that Case~1 of the proof of Theorem~\ref{T:Main} applies to some exceptional component.

Since $|S|=t(G)+1$, the revised Case~2 of the proof of Theorem~\ref{T:Main} cannot occur: Case~2 gives the stronger inequality $|S|\le t(G)$. Hence, Case~1 applies to some exceptional component $H$. Let $u$ be the corresponding leaf, let $v$ be its neighbour in $H$, and let $x$ be the unique apex of $\triangle uvx$; thus $xv$ lies on no $4$-cycle in $G-u$.

Form $G'$ and $S'$ as in Case~1. By Lemma~\ref{L:CI-preservation}, $S'$ is Carath\'eodory independent in $G'$, and the Case~1 triangle count gives
\[
t(G')=t(G)-1.
\]
Therefore
\[
|S'|=|S|-1=t(G)=t(G')+1.
\]
Moreover, the reduction preserves the hypotheses of Theorem~5: $G'$ is connected, and every edge of $G'$ lies on a triangle. Indeed, $\triangle uvx$ is the only triangle containing $u$, so every edge incident with $u$ is one of $uv,ux$, and deleting $u$ does not disconnect the graph because $vx$ remains. Every other surviving edge retains a triangle after the deletion and contraction.

Thus the strengthened inductive hypothesis applies to $(G',S')$. We obtain a full binary tree $\mathcal T'$ satisfying the required
structural conditions and having leaf set exactly $S'$. In particular,
$\bar v\in S'$ is a leaf of $\mathcal T'$.

We claim that $x$ belongs to a triangle of $G$ other than
$\triangle uvx$. Otherwise, since $\triangle uvx$ is also the only
triangle containing $u$, deleting $u$ can affect the hull only at
$u$ and $x$. Let
\[
q\in \Hull{S}\setminus\bigcup_{p\in S}\Hull{S\setminus\{p\}}
\]
witness the Carath\'eodory independence of $S$. Then $q\in\{u,x\}$.
However, $q\neq u$, since $u\in S\setminus\{p\}$ for every
$p\in S\setminus\{u\}$; and $q\neq x$, since $|S|\ge3$, so for some
$p\in S\setminus\{u,v\}$ the vertices $u,v\in S\setminus\{p\}$
generate $x$. This is a contradiction.

Thus $x$ belongs to a triangle other than $\triangle uvx$. Its image
in $G'$ is a triangle containing $\bar v$. Since $\bar v$ is a leaf
of $\mathcal T'$, it belongs to exactly one triangle of $G'$. Hence
the unique triangle of $G'$ containing $\bar v$ is the image of a
triangle of $G$ containing $x$.

Construct \(\mathcal{T}\) from \(\mathcal{T}'\) by replacing leaf \(\bar{v}\) with an internal node labelled \(x\), with two new leaf children \(u\) and \(v\).
The leaf set of $\mathcal T$ is
\[
(S'\setminus\{\bar v\})\cup\{u,v\}
=(S\setminus\{u,v\})\cup\{u,v\}=S.
\]
Since \(\{x,u,v\}\) induces the triangle \(\triangle uvx\) in \(G\), the new internal node \(x\) with children \(u\) and \(v\) satisfies condition~(i).
The reduction from $G$ to $G'$ removes exactly the triangle
$\triangle uvx$ and creates no new triangles. Hence it induces a
bijection between the triangles of $G$ other than $\triangle uvx$
and the triangles of $G'$. As shown above, the unique triangle of $G'$ containing $\bar v$
is the image of a triangle of $G$ containing $x$. Thus, replacing
$\bar v$ by $x$ preserves the triangle represented by its parent
in $\mathcal T'$, while the new internal node $x$ with children
$u$ and $v$ represents $\triangle uvx$. All other internal nodes
of $\mathcal T'$ and their corresponding triangles are unchanged.
Hence $\mathcal T$ satisfies conditions~(i) and~(ii).
\end{proof}

Before proving the characterization of the extremal graphs for the exchange number, we first record a simple observation about the graphs appearing in
Theorem~\ref{T:New1}.

\begin{lemma}\label{L:tree-hulls}
Let $F$ be obtained from a rooted full binary tree $\mathcal T$ by
adding an edge between every pair of siblings, let $L$ be the leaf
set of $\mathcal T$, and let $r$ be its root.  For every leaf
$a\in L$,
\[
 V(F)\setminus \Hull{L\setminus\{a\}}
\]
is exactly the vertex set of the $a$--$r$ path in $\mathcal T$.
Consequently, $r$ is the unique vertex in
\[
 \Hull{L}\setminus\bigcup_{a\in L}\Hull{L\setminus\{a\}}.
\]
Moreover, if $z$ is a non-leaf vertex of $\mathcal T$ and $a$ is a
leaf in the subtree rooted at $z$, then
\[
 r\in \Hull{(L\cup\{z\})\setminus\{a\}}.
\]
\end{lemma}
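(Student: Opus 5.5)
Fix a leaf \(a \in L\) and write \(P\) for the vertex set of the \(a\)--\(r\) path in \(\mathcal T\). The plan is to show two inclusions: \(\Hull{L\setminus\{a\}} \cap P = \emptyset\), and \(V(F)\setminus P \subseteq \Hull{L\setminus\{a\}}\). First we record that the triangles of \(F\) are exactly the sets \(\{p,c_1,c_2\}\), where \(p\) is an internal node of \(\mathcal T\) and \(c_1,c_2\) are its children. Indeed, any triangle uses at most one sibling edge, since each vertex has at most one sibling. A triangle using no sibling edge would be a triangle in the tree \(\mathcal T\), which is impossible. If it uses the sibling edge \(c_1c_2\), its third vertex must be adjacent to both \(c_1\) and \(c_2\), and the only such vertex is their common parent.

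For the first inclusion we repeat the argument already given in the (\(\Leftarrow\)) direction of the proof of Theorem~\ref{T:New1}. Suppose some vertex of \(P\) enters the hull, and let \(q\) be the first such vertex in the iteration. Then \(q \neq a\), since \(a\) is not in the seed set \(L\setminus\{a\}\). The only triangles containing \(q\) are the one with its two children and the one with its parent and sibling. In the first, one child of \(q\) lies on \(P\). In the second, the parent of \(q\) lies on \(P\). Either way another vertex of \(P\) would already have to be in the hull, contradicting the choice of \(q\).

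For the second inclusion, take any \(y \notin P\). Then the subtree \(\mathcal T_y\) rooted at \(y\) contains no vertex of \(P\), and in particular no \(a\). We show \(y \in \Hull{L\setminus\{a\}}\) by induction on the height of \(y\). If \(y\) is a leaf, it lies in \(L\setminus\{a\}\). Otherwise both children of \(y\) lie off \(P\), so by induction they are in the hull, and the triangle they form with \(y\) generates \(y\).

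Together these give \(V(F)\setminus\Hull{L\setminus\{a\}} = P\). Since \(r\) lies on every such path, \(r \notin \bigcup_{a}\Hull{L\setminus\{a\}}\). Firing all triangles bottom-up gives \(\Hull{L} = V(F)\), so \(r\) belongs to the difference set. It is the unique element there, because any \(y \neq r\) fails to lie on the path from some leaf: if \(y\) is internal or a leaf with a sibling, take \(a\) to be a leaf in the sibling's subtree, and \(y \neq r\) always has a sibling since the tree is full. Then \(y \in \Hull{L\setminus\{a\}}\).

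For the final claim, let \(z\) be a non-leaf vertex and \(a\) a leaf below \(z\), and start from \((L\cup\{z\})\setminus\{a\}\). Every vertex off the \(a\)--\(r\) path is generated exactly as above. Every path vertex strictly below \(z\) lies in \(\mathcal T_z\), so the upper part of the path, from \(z\) to \(r\), is what must be generated. We climb from \(z\): if \(p\) is on the path and already in the hull, its sibling is off the path and hence in the hull, so the parent of \(p\) is generated by the triangle it forms with \(p\) and the sibling. Inducting up the path reaches \(r\). The only real care needed is the triangle classification in the first step; the rest is routine.
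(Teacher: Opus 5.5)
Your proposal is correct and follows the paper's proof essentially step for step: the first-path-vertex argument, bottom-up generation off the $a$--$r$ path, the fact that the root--leaf paths intersect only in $r$, and climbing from $z$ through sibling branches, with an explicit classification of the triangles of $F$ that the paper uses only implicitly. One small slip: ``$q \neq a$, since $a$ is not in the seed set'' does not follow as written, but it is also unnecessary, because your triangle analysis already rules out $q = a$ (the only triangle at $a$ contains its parent, which lies on $P$).
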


\begin{proof}
Fix $a\in L$.  Every vertex not on the $a$--$r$ path is generated
bottom-up from the leaves in its subtree, all of which are still
present.  No vertex on the $a$--$r$ path can be generated.  Indeed,
if a path vertex were the first one to enter the hull, then every
triangle capable of generating it would contain another vertex of
the same path which had not yet entered: for its child triangle this
is the child on the path, and for its parent triangle this is the
parent on the path.  The root is handled by its child triangle, and
the missing leaf $a$ cannot enter first.  This proves the first
assertion.  The intersection of all root--leaf
paths is just the root, which proves the second assertion.  For the
last assertion, the presence of $z$ bypasses the missing part of the
$a$--$r$ path below $z$; from $z$ upward, each successive ancestor is
generated using the already generated sibling branch.
\end{proof}

\begin{proof}[Proof of Theorem~\ref{T:New2}]
The case $t(G)=0$ is immediate, so assume $t(G)\ge1$.
Suppose first that $e_{\Tri}(G)=t(G)+2$.  Let $S$ be an exchange
independent set of this size.  Choose $p\in S$ and
\[
 q\in \Hull{S\setminus\{p\}}
 \setminus
 \bigcup_{x\in S\setminus\{p\}}\Hull{S\setminus\{x\}}.
\]
Put $R=S\setminus\{p\}$.  Then $R$ is Carath\'eodory independent,
since for every $x\in R$,
\[
 \Hull{R\setminus\{x\}}\subseteq \Hull{S\setminus\{x\}}.
\]
Thus $|R|=t(G)+1$, and the general Carath\'eodory bound is attained.

We first show that $G$ has an edge which lies on no triangle.  If
every edge lay on a triangle, the strengthened form of
Theorem~\ref{T:New1} would give a full binary tree $\mathcal T$ with
leaf set exactly $R$.  By Lemma~\ref{L:tree-hulls}, the witness $q$
must be the root of $\mathcal T$.  Since $p\notin R$, the vertex $p$
is an internal vertex.  Choose a leaf $a$ below $p$.  Then
Lemma~\ref{L:tree-hulls} gives
\[
 q\in \Hull{S\setminus\{a\}},
\]
contrary to the choice of $q$.  Hence there is a unique edge $f$ of
$G$ which lies on no triangle.

Delete $f$.  This does not change triangle convexity.  Let $C$ be
the component of $G-f$ containing $q$.  Every vertex of $R$ belongs
to $C$: otherwise deleting a vertex of $R$ outside $C$ would not
affect whether $q$ belongs to the hull.  Therefore
\[
 t(G)+1=|R|\le t(C)+1\le t(G)+1,
\]
so $t(C)=t(G)$ and $R$ is extremal in $C$.  In particular, every
triangle of $G$ lies in $C$, and Theorem~\ref{T:New1} applies to $C$.
If $p\in V(C)$, the same argument using Lemma~\ref{L:tree-hulls}
again contradicts the exchange independence of $S$.  Hence
$p\notin V(C)$.

Every edge of $G-f$ outside $C$ would lie on a triangle, but all
triangles lie in $C$.  Thus the other component of $G-f$ is the
single vertex $p$.  Consequently $f$ is a pendant edge joining $p$
to $C$, and $C$ is obtained from a full binary tree by adding an
edge between every pair of siblings.

Conversely, let $C$ be obtained from a full binary tree $\mathcal T$
by adding the sibling edges, and attach a new leaf $p$ to any vertex
of $C$.  Let $L$ be the leaf set of $\mathcal T$ and let $r$ be its
root.  Then $|L|=t(G)+1$, and by Lemma~\ref{L:tree-hulls},
\[
 r\in \Hull{L}\setminus
 \bigcup_{a\in L}\Hull{L\setminus\{a\}}.
\]
The new vertex $p$ lies on no triangle, so
$S=L\cup\{p\}$ is exchange independent with distinguished vertex
$p$ and witness $r$.  Hence
\[
 e_{\Tri}(G)\ge |S|=t(G)+2.
\]
The reverse inequality is the general bound, so equality holds.
\end{proof}

\section{Block graphs}\label{A:new345}

For the block-graph arguments, it is convenient to isolate one
reduction that will be used repeatedly.

\begin{lemma}\label{L:block-skeleton}
Let $H$ be a connected block graph with no $K_2$-blocks, and let
$A\subseteq V(H)$ with $|A|\ge2$.  Suppose that
\[
 q\in \Hull{A}
 \setminus
 \bigcup_{a\in A}\Hull{A\setminus\{a\}}.
\]
Then there is a connected subgraph $F\subseteq H$ containing
$A\cup\{q\}$ such that
\begin{enumerate}
\item every block of $F$ is a triangle;
\item $F$ contains at most one triangle from each block of $H$; and
\item $A$ remains Carath\'eodory independent in $F$, with the
same witness $q$.
\end{enumerate}
In particular, $t(F)\le b(H)$.
\end{lemma}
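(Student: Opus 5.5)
The plan is to extract $F$ from a derivation of $q$ from $A$, and then prune that derivation.

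Since $q\in\Hull{A}$, there is a finite closure sequence $A=X_0\subsetneq X_1\subsetneq\dots\subsetneq X_n\ni q$. Each new vertex $y\in X_{i+1}\setminus X_i$ is certified by one chosen triangle $\triangle y y'y''$ with $y',y''\in X_i$. Among all such certified derivations, we pick one that is \emph{minimal}: no certifying triangle can be removed, and no generated vertex can be dropped, while $q$ is still produced. Let $F$ be the subgraph consisting of $A\cup\{q\}$ together with the edges of the certifying triangles that are actually used.

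Properties we expect from minimality:
\begin{itemize}
\item Every generated vertex lies on the ``ancestry'' of $q$.
\item Every vertex of $A$ is used; otherwise $q\in\Hull{A\setminus\{a\}}$ for the unused $a$.
\item Hence $F$ is connected.
\end{itemize}

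\textbf{Step 1: every block of $F$ is a triangle.} In a block graph every triangle lies inside a single block, since blocks are cliques and cycles stay within blocks. Consider two chosen triangles lying in the same block $B$ of $H$. Whenever two vertices of $B$ are present in the hull, all of $B$ enters the hull at the next step. So we can reroute: once two vertices of $B$ are present, every further vertex of $B$ that is needed can be obtained from one fixed triangle on those two vertices. We then restrict to the vertices of $B$ actually needed. Iterating this exchange, we aim for a derivation in which each block of $H$ contributes at most one triangle. This gives (2).

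Since the chosen triangles then lie in distinct blocks of $H$, and the block–cutvertex structure of $H$ is a tree, any cycle of $F$ lies within a single chosen triangle. Thus the blocks of $F$ are exactly these triangles, giving (1). The bound $t(F)\le b(H)$ follows from (2) together with the observation that $F$ has no triangles other than its blocks: a block graph with triangle blocks has exactly one triangle per block.

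\textbf{Step 2: condition (3).} Since $q\in\Hull[F]{A}$ by construction, we need $q\notin\Hull[F]{A\setminus\{a\}}$ for each $a\in A$. Every triangle of $F$ is a triangle of $H$, so $\Hull[F]{X}\subseteq\Hull[H]{X}$ for every $X$. Hence $q\notin\Hull[F]{A\setminus\{a\}}$ follows immediately from $q\notin\Hull[H]{A\setminus\{a\}}$. Monotonicity therefore makes (3) nearly free, provided $F$ genuinely contains $q$ and all of $A$.

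\textbf{Main obstacle: the rerouting in Step 1.} A block $B$ might be entered at two different times from different sides, via two different pairs of present vertices. Collapsing both to one triangle could force the order of generation to change, and we must check that this does not destroy the derivation of $q$.

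The first approach to try is to choose, for each block, the earliest time $i$ at which $|X_i\cap B|\ge2$. We fix a pair at that time and generate all needed vertices of $B$ from this pair. This requires only one triangle per needed vertex. To get down to a single triangle we must show that at most one new vertex of $B$ is needed. By minimality, a needed vertex of $B$ must be a cut vertex leading toward $q$ or toward other used blocks, or $q$ itself.

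If $B$ has several such needed vertices, we instead keep a spanning ``fan'' and contract back. However, this would violate (2). So the key claim is: in a minimal derivation, each block contains at most three used vertices, namely its two entry vertices and one exit vertex. The reasoning is that the block–cut tree path from the leaves of $A$ to $q$ enters through a pair and exits through a single vertex. This claim is where most of the careful work lies.
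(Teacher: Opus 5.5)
\paragraph{Gap: the key claim is stated but not proved.} Your overall shape is right. You extract $F$ from a derivation of $q$ and obtain (3) from the monotonicity $\Hull[F]{X}\subseteq\Hull[H]{X}$ once $A\cup\{q\}\subseteq V(F)$; that Step~2 matches the paper's last step. The problem is that (1) and (2) rest entirely on the claim you defer to the end: that after rerouting, each block of $H$ keeps at most one triangle (two entry vertices and one exit vertex). You describe the rerouting as something you ``aim for'', say the careful work lies in this claim, and then never supply that work.

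Bottom-up minimality alone does not deliver it. Take a $K_4$-block $\{a_1,a_2,y,q\}$ with $A=\{a_1,a_2\}$. The derivation ``$y$ from $a_1a_2$, then $q$ from $a_1y$'' has no removable certificate unless you re-certify, yet it uses two triangles of one block. More generally, pruning bottom-up leaves open exactly the configurations you worry about: a block entered from several sides, or a block with several generated vertices that all look needed, which forces a `fan'.

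\paragraph{The missing idea: build $F$ top-down from $q$.} First, $q\notin A$, since otherwise $q\in\Hull{A\setminus\{a\}}$ for any $a\neq q$. Fix a one-vertex-at-a-time derivation of $q$ from $A$.
\begin{enumerate}
\item For a target $v\notin A$ (initially $v=q$), let $B$ be the block through which $v$ was generated. Let $y,z$ be the two \emph{earliest} vertices of $B$ to enter; both precede $v$.
\item Keep only $\triangle vyz$, and recurse on whichever of $y,z$ is not in $A$.
\end{enumerate}
When $y$ entered, at most one vertex of $B$ was present, so $y$ was generated through a block $B_y\neq B$; also $y\neq v$. Hence each branch of the recursion is a non-backtracking walk in the block--cut tree, i.e.\ a path moving away from the root. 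The branches leaving $B$ through $y\neq z$ therefore lie in disjoint subtrees.

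Consequently every block is visited at most once and contributes exactly one triangle. Your ``one exit vertex'' property comes for free: the only vertex of $B$ ever generated through $B$ is its target, so no fan can arise. This gives (1), (2) and connectivity. Every $a\in A$ occurs as a leaf of the recursion: the retained triangles derive $q$ from the leaves, all of which lie in $A$, so an omitted $a$ would give $q\in\Hull{A\setminus\{a\}}$. Then (3) follows from your monotonicity argument.

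This is essentially the paper's proof. The paper roots the block--cut tree at $q$ and keeps, in each block, only the triangle on two already available vertices and the vertex towards the root. Choosing the two earliest vertices is what guarantees that the supporting vertices always come from blocks farther from the root.
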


\begin{proof}
Root the block--cut tree of $H$ towards $q$, and trace backwards a
fixed derivation of $q$ from $A$.  Since every block of $H$ is a
clique, whenever the derivation passes through a block $B$ towards
$q$, only two already available vertices of $B$ are needed to
generate the vertex of $B$ lying in the direction of the root.  Keep
only the triangle formed by these three vertices.  If either of the
two supporting vertices was itself generated from a block farther
from the root, repeat the same procedure there.

Because the block--cut graph is a tree, this recursion never returns
to a block already used.  Hence at most one triangle is retained
from each block, and the retained triangles form a connected
subgraph $F$ in which the same derivation still produces $q$ from
$A$.  Every $a\in A$ must occur in this derivation, for otherwise
$q\in\Hull{A\setminus\{a\}}$.  Thus $A\subseteq V(F)$.  Finally, since $F\subseteq H$, the hull of $A\setminus\{a\}$
computed in $F$ is contained in $\Hull{A\setminus\{a\}}$
computed in $H$, for every $a\in A$.  Thus the same vertex $q$
witnesses the Carath\'eodory independence of $A$ in $F$.
\end{proof}

\begin{proof}[Proof of Theorem~\ref{T:New3}]
Delete all $K_2$-blocks of $G$.  Since no such edge lies on a
triangle, triangle closure acts independently on the resulting
components.

Let $S$ be Carath\'eodory independent, with witness $q$.  If
$|S|=1$, the required bound is immediate.  Hence assume $|S|\ge2$.
Every vertex of $S$ lies in the same component as $q$, for otherwise
removing a vertex in another component would not affect the
membership of $q$ in the hull.  Thus $S\subseteq V(H_0)$ for some
$H_0$.  By Lemma~\ref{L:block-skeleton}, there is a triangle-block
subgraph $F\subseteq H_0$ in which $S$ remains Carath\'eodory
independent and
\[
 t(F)\le b(H_0).
\]
Hence
\[
 |S|\le t(F)+1\le b(H_0)+1,
\]
which proves the first bound.

Now let $S$ be exchange independent.  Choose $p\in S$ and
\[
 q\in \Hull{S\setminus\{p\}}
 \setminus
 \bigcup_{x\in S\setminus\{p\}}\Hull{S\setminus\{x\}},
\]
and put $R=S\setminus\{p\}$.  As above, $R$ is Carath\'eodory
independent with witness $q$.  If $|R|=1$, the required exchange
bound is immediate.  Hence assume $|R|\ge2$, and let $H_0$ be the
component containing $R$ and $q$.

Suppose first that $p\notin V(H_0)$.  The first $K_2$-block on the
path from $H_0$ to $p$, together with $H_0$, is contained in a
subgraph $H_1$ of the type appearing in the statement, with
\[
 b(H_1)=b(H_0)+1.
\]
Therefore
\[
 |S|=|R|+1\le b(H_0)+2=b(H_1)+1.
\]

It remains to consider $p\in V(H_0)$.  If
$|R|\le b(H_0)$, then immediately
\[
 |S|\le b(H_0)+1.
\]
Suppose instead that $|R|=b(H_0)+1$.  Equality must then hold
throughout the Carath\'eodory argument above.  Lemma~\ref{L:block-skeleton}
therefore gives a subgraph $F$ with exactly one triangle from every
block of $H_0$ and
\[
 |R|=t(F)+1.
\]
By the strengthened form of Theorem~\ref{T:New1}, $F$ is obtained
from a full binary tree $\mathcal T$ whose leaf set is exactly $R$.
Since $q$ remains a witness in $F$, Lemma~\ref{L:tree-hulls} shows
that $q$ is the root of $\mathcal T$.

If $p\in V(F)$, then $p$ is an internal vertex of $\mathcal T$.
Choosing a leaf $a$ below $p$ and applying
Lemma~\ref{L:tree-hulls} gives
$q\in\Hull{S\setminus\{a\}}$, a contradiction.  If
$p\notin V(F)$, let $B$ be the block of $H_0$ containing $p$ and let
$\triangle zxy$ be the unique triangle of $F$ contained in $B$,
where $z$ is the parent of $x$ and $y$ in $\mathcal T$.  Choose a
leaf $a$ below $x$.  In \(\Hull{S\setminus\{a\}}\), the vertex $y$
is generated from its intact subtree, while $p$ is present from the
start.  Since $B$ is a clique, $p,y,z$ form a triangle, and hence
$z$ is generated.  Proceeding upward in $\mathcal T$ now generates
$q$, again a contradiction.  Thus
\[
 |S|\le b(H_0)+1.
\]
Combining the two cases proves the exchange-number bound.
\end{proof}

\begin{proof}[Proof of Theorem~\ref{T:New4}]
If $b(G)=0$, then $G$ consists of a single vertex and the statement
is immediate.  Suppose first that $c_{\Tri}(G)=b(G)+1$, and let $S$
be a Carath\'eodory independent set of this size with witness $q$.
Lemma~\ref{L:block-skeleton} gives a connected triangle-block
subgraph $F\subseteq G$ such that $S$ remains Carath\'eodory
independent in $F$ and
\[
 b(G)+1=|S|\le t(F)+1\le b(G)+1.
\]
Thus $t(F)=b(G)$, so $F$ contains exactly one triangle from every
block of $G$, and $c_{\Tri}(F)=t(F)+1$.  By
Theorem~\ref{T:New1}, $F$ is obtained from a full binary tree with
leaf set $S$ by adding an edge between every pair of siblings.
Every block of $G$ is a clique containing exactly one triangle block
of $F$, so $G$ is obtained from $F$ by enlarging these blocks one
vertex at a time.

Conversely, suppose $G$ is obtained in this way from a graph $F$
associated with a full binary tree $\mathcal T$, and let $L$ be the
leaf set of $\mathcal T$.  The added vertices are not cut vertices
and are not initially present in $L$.  Inside an enlarged block,
such a vertex can enter a hull only after two vertices of that block
have already entered.  Hence enlarging the blocks does not change
the closure process on the vertices of $F$.  By
Lemma~\ref{L:tree-hulls}, the root of $\mathcal T$ therefore still
witnesses that $L$ is Carath\'eodory independent in $G$.  Since
\[
 |L|=b(G)+1,
\]
Theorem~\ref{T:New3} gives $c_{\Tri}(G)=b(G)+1$.
\end{proof}

\begin{proof}[Proof of Theorem~\ref{T:New5}]
Let $p$ be the leaf endpoint of the unique pendant $K_2$-block, and
put $H=G-p$.  Then $H$ is a connected block graph with no
$K_2$-blocks and
\[
 b(H)=b(G)-1.
\]
Since $p$ lies on no triangle, triangle closure on $V(H)$ is
unchanged and
\[
 c_{\Tri}(G)=c_{\Tri}(H).
\]
Moreover,
\[
 e_{\Tri}(G)=c_{\Tri}(H)+1.
\]
Indeed, the upper bound follows from Sierksma's inequality, while for
the reverse inequality one takes a maximum Carath\'eodory
independent set $R$ of $H$ and observes that $R\cup\{p\}$ is exchange
independent with distinguished vertex $p$.

Consequently,
\[
 e_{\Tri}(G)=b(G)+1=c_{\Tri}(G)+1
\]
holds if and only if
\[
 c_{\Tri}(H)=b(H)+1.
\]
The result now follows directly from Theorem~\ref{T:New4}, followed
by the attachment of the pendant leaf $p$.
\end{proof}

\section{\(k\)-trees}\label{A:new6}

We use the following consequence of the proof of the known result for
$2$-connected chordal graphs.  Recall that an edge $uv$ is
\emph{dominating} if $N[u]\cup N[v]=V(G)$.

\begin{lemma}\label{L:dominating-edge}
Let $G$ be a $2$-connected chordal graph.  Then
\[
 e_{\Tri}(G)=2
\]
if and only if every edge of $G$ is dominating.
\end{lemma}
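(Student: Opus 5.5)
The plan is to turn exchange independence of size three into a purely local condition on one edge and one vertex, and then use Theorem~\ref{T:chordal} together with the bound \(e_{\Tri}(G)\le c_{\Tri}(G)+1\) from Theorem~\ref{T:Sierksma}. Since \(c_{\Tri}(G)=2\), we have \(e_{\Tri}(G)\in\Set{2,3}\); the lower bound \(2\) holds because \(G\) has edges and every edge lies on a triangle. So the task reduces to showing that \(G\) has an exchange independent set of size three if and only if some edge of \(G\) is not dominating.

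The key input I would isolate first is the following claim: in a \(2\)-connected chordal graph, \(\Hull{\Set{a,b}}=V(G)\) for every edge \(ab\). This is the fact underlying the proof of \(c_{\Tri}(G)=2\) in Theorem~\ref{T:chordal}, and I would either quote it from there or reprove it as follows. Let \(C=\Hull{\Set{a,b}}\) and suppose \(C\ne V(G)\). Take a component \(K\) of \(G-C\). By \(2\)-connectivity, \(\card{N(K)}\ge2\), and \(N(K)\subseteq C\). In a chordal graph, a minimal separator contained in \(N(K)\) that separates a vertex of \(K\) from the rest of \(C\) is a clique. Using this (Dirac's theorem), I would find a vertex \(w\in K\) adjacent to two adjacent vertices of \(C\). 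Then \(w\) forms a triangle with two vertices of \(C\), so \(w\in C\), a contradiction. I expect this step to be the main technical obstacle, specifically choosing \(K\) and the separator so that two adjacent vertices of \(C\) have a common neighbour in \(K\). The degenerate case \(C=N(K)\) needs a separate check, and there the fact that \(C\) contains the edge \(ab\) should help.

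Two further observations are needed. First, if \(a,b\) are nonadjacent, then no triangle contains both, so \(\Hull{\Set{a,b}}=\Set{a,b}\). Second, consider a candidate set \(S=\Set{p,a,b}\) with distinguished vertex \(p\) and witness \(q\). Exchange independence then means
\[
q\in\Hull{\Set{a,b}}\setminus\bigl(\Hull{\Set{p,a}}\cup\Hull{\Set{p,b}}\bigr).
\]
Since \(q\notin\Set{a,b}\), we need \(\Hull{\Set{a,b}}\ne\Set{a,b}\), so \(ab\) must be an edge, and then \(\Hull{\Set{a,b}}=V(G)\). For \(q\) to avoid \(\Hull{\Set{p,a}}\), this hull cannot be all of \(V(G)\), so \(p\not\sim a\); likewise \(p\not\sim b\). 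In that case \(\Hull{\Set{p,a}}\cup\Hull{\Set{p,b}}=\Set{p,a,b}\), so the condition becomes the existence of a vertex outside \(\Set{p,a,b}\).

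Putting this together: \(e_{\Tri}(G)=3\) if and only if there is an edge \(ab\) and a vertex \(p\) adjacent to neither \(a\) nor \(b\). Such a \(p\) lies outside \(N[a]\cup N[b]\), so this is exactly the statement that \(ab\) is not dominating. For the converse direction, a witness \(q\) always exists: the edge \(ab\) lies on a triangle \(abc\), and \(c\ne p\) because \(p\) is nonadjacent to \(a\). Hence \(e_{\Tri}(G)=2\) if and only if every edge is dominating.
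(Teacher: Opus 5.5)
Your proof is correct and follows essentially the same route as the paper's. Both arguments rest on three facts: $\Hull{\Set{a,b}}=V(G)$ for every edge $ab$, nonadjacent pairs are convex, and $e_{\Tri}(G)\le 3$. The difference is one of packaging. You fold both directions into a single characterization of exchange independent triples $\Set{p,a,b}$, namely that $ab\in E(G)$ and $p$ is adjacent to neither $a$ nor $b$. The paper instead treats the case where every edge is dominating by exhibiting two one-vertex deletions with full hull.

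The paper, like you, simply quotes the hull-set property of edges, so your tentative Dirac-type reproof is not needed. If you did want to reprove it, a shortest-cycle argument on a component $K$ of $G-\Hull{\Set{a,b}}$, using $\card{N(K)}\ge 2$ and chordality, settles it quickly.
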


\begin{proof}
In a $2$-connected chordal graph, every pair of adjacent vertices is
a hull set.  Suppose first that every edge is dominating.  Any
exchange independent set of size at least three must contain an
edge, since an independent set is already closed under triangle
convexity and is exchange dependent.  Let $ab$ be such an edge.  For any third vertex $c$ in the set, $c$ is adjacent
to $a$ or $b$; say $bc\in E(G)$.  Then
\[
 \Hull{\{a,b\}}=\Hull{\{b,c\}}=V(G),
\]
so two distinct one-vertex deletions already have full hull.  Hence
the set is exchange dependent, and $e_{\Tri}(G)=2$.

Conversely, suppose that some edge $ab$ is not dominating, and let
$c$ be adjacent to neither $a$ nor $b$.  Then
\[
 \Hull{\{a,b\}}=V(G),
 \qquad
 \Hull{\{a,c\}}=\{a,c\},
 \qquad
 \Hull{\{b,c\}}=\{b,c\}.
\]
Since $G$ is connected and $c$ is adjacent to neither $a$ nor $b$,
there is a vertex outside $\{a,b,c\}$.  Any such vertex witnesses
that $\{a,b,c\}$ is exchange independent.  Since the exchange number
of a $2$-connected chordal graph is at most $3$, we obtain
$e_{\Tri}(G)=3$.
\end{proof}

\begin{proof}[Proof of Theorem~\ref{T:k-tree}]
The case $n=k$ is immediate, so assume $n\ge k+1$.  Then $G$ is a
$2$-connected chordal graph, and by Lemma~\ref{L:dominating-edge} it
suffices to characterize the $k$-trees in which every edge is
dominating.

We prove by induction on $n$ that every such $k$-tree has the form
$K_k\vee\overline{K}_{n-k}$.  Let $v$ be a last vertex in a
$k$-tree construction of $G$, and let $Q=N(v)$; thus $Q$ is a
$k$-clique.  The graph $G-v$ is again a $k$-tree, and every edge of
$G-v$ is still dominating in $G-v$.  By induction,
\[
 G-v=C\vee I,
\]
where $C$ is a $k$-clique and $I$ is independent.

If $Q=C$, then
\[
 G=C\vee(I\cup\{v\}),
\]
as required.  Suppose $Q\ne C$.  Since $Q$ is a $k$-clique in
$C\vee I$, it has the form
\[
 Q=(C\setminus\{c\})\cup\{y\}
\]
for some $c\in C$ and $y\in I$.  If there were another vertex
$z\in I\setminus\{y\}$, then $z$ would be adjacent to neither $v$
nor $y$, contradicting the fact that the edge $vy$ is dominating.
Hence $I=\{y\}$.  Now $c$ and $v$ are nonadjacent and both are
adjacent to every vertex of $Q$, so
\[
 G=Q\vee\overline{K}_2,
\]
which is again of the required form.

Conversely, every edge of $K_k\vee\overline{K}_{n-k}$ is dominating.
Hence Lemma~\ref{L:dominating-edge} gives $e_{\Tri}(G)=2$.  Every
other $k$-tree has exchange number $3$ by the known bound for
$2$-connected chordal graphs.
\end{proof}

\section{Figures}\label{A:figures}

\begin{figure}
\centering
\begin{tikzpicture}[every node/.style={font=\small}]

\coordinate (v0) at (0,0);
\coordinate (v1) at (1.6,0);
\coordinate (v2) at (3.2,0);
\coordinate (v3) at (4.8,0);
\coordinate (v4) at (6.4,0);

\coordinate (u1) at (0.8,-1.0);
\coordinate (u2) at (2.4,-1.0);
\coordinate (u3) at (4.0,-1.0);
\coordinate (u4) at (5.6,-1.0);

\draw (v0)--(v1)--(u1)--(v0);
\draw (v1)--(v2)--(u2)--(v1);
\draw (v2)--(v3)--(u3)--(v2);
\draw (v3)--(v4)--(u4)--(v3);

\node at (0.8,-0.42) {$T_1$};
\node at (2.4,-0.42) {$T_2$};
\node at (4.0,-0.42) {$T_3$};
\node at (5.6,-0.42) {$T_4$};

\foreach \x in {v0,u1,u2,u3,u4}
    \draw[fill=white] (\x) circle (3.3pt);

\foreach \x in {v0,v1,v2,v3,v4,u1,u2,u3,u4}
    \fill (\x) circle (1.6pt);

\end{tikzpicture}
\caption{A block graph formed by a chain of four triangles \(T_1,T_2,T_3,T_4\). The circled vertices form a largest Carath\'eodory independent set.}
\label{fig:triangle-chain-four}
\end{figure}

\begin{figure}
\centering
\begin{tikzpicture}[every node/.style={font=\small}]

\coordinate (v0) at (0,0);
\coordinate (v1) at (1.6,0);
\coordinate (v2) at (3.2,0);
\coordinate (v3) at (4.8,0);

\coordinate (u1) at (0.8,-1.0);
\coordinate (u2) at (2.4,-1.0);
\coordinate (u3) at (4.0,-1.0);

\coordinate (w1) at (1.6,-2.0);
\coordinate (w2) at (3.2,-2.0);

\draw (v0)--(v1)--(u1)--(v0);
\draw (v1)--(v2)--(u2)--(v1);
\draw (v2)--(v3)--(u3)--(v2);
\draw (w1)--(w2)--(u2)--(w1);

\node at (0.8,-0.42) {$T_1$};
\node at (2.4,-0.42) {$T_2$};
\node at (4.0,-0.42) {$T_3$};
\node at (2.4,-1.57) {$T_4$};

\foreach \x in {v0,u1,u3,w1,w2}
    \draw[fill=white] (\x) circle (3.3pt);

\foreach \x in {v0,v1,v2,v3,u1,u2,u3,w1,w2}
    \fill (\x) circle (1.6pt);

\end{tikzpicture}
\caption{A block graph obtained from a chain of three triangles by attaching a fourth triangle at the degree-\(2\) vertex of the middle triangle. The circled vertices form a largest Carath\'eodory independent set.}
\label{fig:block-counterexample}
\end{figure}

\begin{figure}
\centering
\begin{tikzpicture}[scale=1.0]
\begin{scope}[xshift=0cm]
  \node[vtxlbl] at (0.7,2.5) {$G$};
  \node[vtxlbl] at (-2.4,0) {$\cdots$};
  \node[inS, label={[vtxlbl]below:$w$}]  (w) at (-1.8,0) {};
  \node[inS, label={[vtxlbl]below:$v$}]  (v) at (0,0)   {};
  \node[inS, label={[vtxlbl]below:$u$}]  (u) at (2,0)   {};
  \node[notS,label={[vtxlbl]above:$x$}]  (x) at (1,1.7) {};
  \draw[Hedge] (w) -- (v) -- (u);    
  \draw[Hedge,dashed] (-2.1,0) -- (w); 
  \draw (u) -- (x) -- (v);           
  \node[vtxlbl] at (1,0.55) {$\triangle uvx$};
\end{scope}
\draw[->,thick] (3.2,0.85) -- (4.0,0.85);
\node[vtxlbl] at (3.6,1.15) {\small contract $xv$};
\node[vtxlbl] at (3.6,0.55) {\small delete $u$};
\begin{scope}[xshift=5.0cm]
  \node[vtxlbl] at (0.5,2.5) {$G'$};
  \node[vtxlbl] at (-2.4,0) {$\cdots$};
  \node[inS, label={[vtxlbl]below:$w$}]   (w2) at (-1.8,0) {};
  \node[inS, label={[vtxlbl]below:$\bar{v}$}] (vb) at (0,0) {};
  \draw[Hedge] (w2) -- (vb);
  \draw[Hedge,dashed] (-2.1,0) -- (w2);
\end{scope}
\end{tikzpicture}
\caption{The leaf $u \in S$ and its apex $x \notin S$ are removed; the edge $xv$ is contracted to $\bar{v} \in S'$. Thick edges lie in the exceptional component $H$.}
\end{figure}

\begin{figure}
\centering
\begin{tikzpicture}[scale=1.0]
  \node[inS, label={[vtxlbl]below:$u$}] (u) at (2.5,0) {};
  \node[inS, label={[vtxlbl]below:$v$}] (v) at (0,0)   {};
  \node[inS, label={[vtxlbl]below:$w$}] (w) at (-2.5,0){};
  \node[vtxlbl] at (-3.5,0) {$\cdots$};
  \draw[Hedge,dashed] (-3.1,0) -- (w);
  \draw[Hedge] (u) -- (v) -- (w);
  \node[notS,label={[vtxlbl]right:$x$}] (x) at (1.5,1.8) {};
  \node[notS,label={[vtxlbl]left:$y$}]  (y) at (-1.5,1.8){};
  \draw (u) -- (x) -- (v);
  \draw (v) -- (y) -- (w);
  \node[notS,label={[vtxlbl]above:$z$}] (z) at (0,3.4) {};
  \node[notS,label={[vtxlbl]right:$a$}] (a) at (3.0,3.0){};
  \node[notS,label={[vtxlbl]left:$b$}]  (b) at (-3.0,3.0){};
  \draw (x) -- (z) -- (a) -- (x);
  \draw (y) -- (z) -- (b) -- (y);
  \node[vtxlbl] at (2.3,2.5) {\small$\triangle xza$};
  \node[vtxlbl] at (-2.3,2.5) {\small$\triangle yzb$};
  \node[vtxlbl] at (1.8,0.7) {\small$\triangle uvx$};
  \node[vtxlbl] at (-1.8,0.7) {\small$\triangle vwy$};
\end{tikzpicture}
\caption{Thick edges lie in the exceptional component $H$; $u$ is a chosen leaf. The 4-cycle $xvyzx$ accounts for the edges $xv$ (in $\triangle uvx$), $vy$ (in $\triangle vwy$), $yz$ (in $\triangle yzb$), and $zx$ (in $\triangle xza$). Vertices $a$ and $b$ are the spare triangle vertices whose membership in $S$ drives the subcase analysis.}
\end{figure}

\begin{figure}
\centering
\begin{tikzpicture}[scale=1.1]
  \fill[blue!12] (-2.5,0) -- (-0.5,0) -- (0,2) -- cycle;
  \fill[blue!12] (0.5,0) -- (2.5,0) -- (0,2) -- cycle;
  \draw[dashed, gray!70, rounded corners=5pt]
    (-3.2,-0.5) rectangle (-0.1,0.5);
  \node[gray, font=\small, anchor=south west] at (-3.2,-0.5) {$H$};
  \draw[dashed, gray!70, rounded corners=5pt]
    (0.1,-0.5) rectangle (3.2,0.5);
  \node[gray, font=\small, anchor=south east] at (3.2,-0.5) {$H'$};
  \draw[line width=1.5pt] (-2.5,0) -- (-0.5,0);
  \draw[line width=1.5pt] (0.5,0) -- (2.5,0);
  \draw (-2.5,0) -- (0,2);
  \draw (-0.5,0) -- (0,2);
  \draw (0.5,0) -- (0,2);
  \draw (2.5,0) -- (0,2);
  \fill (-2.5,0) circle (2.5pt);
  \fill (-0.5,0) circle (2.5pt);
  \fill (0.5,0) circle (2.5pt);
  \fill (2.5,0) circle (2.5pt);
  \draw[fill=white, line width=0.8pt] (0,2) circle (2.5pt);
  \node[below=2pt] at (-2.5,0) {$u$};
  \node[below=2pt] at (-0.5,0) {$v$};
  \node[below=2pt] at (0.5,0) {$u'$};
  \node[below=2pt] at (2.5,0) {$v'$};
  \node[above=2pt] at (0,2) {$x$};
\end{tikzpicture}
\caption{If \(x\) were the apex of exceptional edges \(uv\) in \(H\) and \(u'v'\) in \(H'\) simultaneously, then \(u',v' \in S \setminus \{u\}\) would force \(x\) into \(I_\triangle(S \setminus \{u\})\), and then \(v \in S\setminus\{u\}\) and \(\triangle uvx\) would give \(u \in \Hull{S\setminus\{u\}}\), a contradiction.}
\label{fig:unique-apex}
\end{figure}

\begin{figure}
\centering
\begin{tikzpicture}[scale=1.0]
  \node[notS,label={[vtxlbl]above:$x_1$}] (x1) at (0,2.2)     {};
  \node[notS,label={[vtxlbl]left:$x_2$}]  (x2) at (-1.9,-1.1) {};
  \node[notS,label={[vtxlbl]right:$x_3$}] (x3) at (1.9,-1.1)  {};
  \draw (x1) -- (x2) -- (x3) -- (x1);
  \node[vtxlbl] at (0,0.3) {$T$};
  \node[inS,label={[vtxlbl]above:$u_1$}] (u1) at (-0.8,3.5) {};
  \node[inS,label={[vtxlbl]above:$v_1$}] (v1) at (0.8,3.5)  {};
  \draw[Hedge] (u1) -- (v1);
  \draw (u1) -- (x1) -- (v1);
  \node[inS,label={[vtxlbl]left:$u_2$}] (u2) at (-3.5,-0.4) {};
  \node[inS,label={[vtxlbl]left:$v_2$}] (v2) at (-3.0,-1.8) {};
  \draw[Hedge] (u2) -- (v2);
  \draw (u2) -- (x2) -- (v2);
  \node[inS,label={[vtxlbl]right:$v_3$}] (v3) at (3.5,-0.4) {};
  \node[inS,label={[vtxlbl]right:$u_3$}] (u3) at (3.0,-1.8) {};
  \draw[Hedge] (u3) -- (v3);
  \draw (u3) -- (x3) -- (v3);
\end{tikzpicture}
\caption{If all three vertices of $T$ were apices of exceptional edges, then in $\mathrm{Hull}(S\setminus\{u_1\})$ the vertices $x_2$ and $x_3$ enter the first iterate (via $u_2v_2$ and $u_3v_3$), then $T$ forces $x_1$ into the hull, and $\triangle u_1v_1x_1$ returns $u_1$, a contradiction.}
\end{figure}

\end{document}